\newtheorem{theorem}{Theorem}
\newtheorem{theom}{Theorem}
\newtheorem{lam}{Lemma}
\newtheorem{lemma}{Lemma}
\newtheorem{prop}{Proposition}
\newtheorem{defin}{Definition}
\theoremstyle{remark}
\newtheorem*{rem}{Remark}
\newtheorem*{cor}{Corollary}
\newenvironment{proof}{\noindent{\bf Proof:}}{$\hfill \Box$ \vspace{10pt}}  
\def\Xint#1{\mathchoice
{\XXint\displaystyle\textstyle{#1}}%
{\XXint\textstyle\scriptstyle{#1}}%
{\XXint\scriptstyle\scriptscriptstyle{#1}}%
{\XXint\scriptscriptstyle\scriptscriptstyle{#1}}%
\!\int}
\def\XXint#1#2#3{{\setbox0=\hbox{$#1{#2#3}{\int}$ }
\vcenter{\hbox{$#2#3$ }}\kern-.6\wd0}}
\def\dashint{\Xint-}
\newcommand{\hel} {
\hskip2.5pt{\vrule height7pt width.5pt depth0pt}
\hskip-.2pt\vbox{\hrule height.5pt width7pt depth0pt}
\, }
\newcommand{\restr}{\hel}
\begin{document}

%\title[A new uncertainty principle]{A new uncertainty principle : small non-zero functions with small support.}
\title{A generalization of the Beurling--Malliavin Majorant Theorem}
\author{Ioann Vasilyev}
%\thanks{The author was supported by the Russian Science Foundation (grant No.~18-11-00053).}
%\address{Universit\'e Paris-Saclay, LAMA (UMR 8050), 61 avenue du G\'en\'eral de Gaulle, 94010, Cr\'eteil, France}
\address{Universit\'e Paris-Saclay, CNRS, Laboratoire de math\`ematiques d'Orsay, 91405, Orsay, France}
%\address{St.-Petersburg Department of V.A. Steklov Mathematical Institute, Russian Academy of Sciences (PDMI RAS), Fontanka 27, St.-Petersburg, 191023, Russia}
%\email{milavas@mail.ru}
\email{ioann.vasilyev@u-psud.fr}
\subjclass[2010]{26A16, 42B20, 42B05, 46E35}
\keywords{Uncertainty Principle, Beurling and Malliavin Theorems}

\begin{abstract}
In this article we prove a generalization of the Beurling--Malliavin Majorant Theorem.  In more detail, we establish a new sufficient condition for a function to be a Beurling--Malliavin majorant. Our result is strictly more general than that of the Beurling--Malliavin Majorant Theorem. We also show that our result is sharp in a number of senses.
\end{abstract}

%% Beginn des Briefes %%%%%%%%%%%%%%%%%%%%%%%%%%%%%%%%%%%%%%%
\maketitle
\section{Introduction}
Let $\mathrm{Lip}(\mathbb R)$ denote the space of Lipschitz functions in $\mathbb R$ (i.e. functions $f$ satisfying for all $x,y\in \mathbb R$ the following inequality: $|f(x)-f(y)|\leq C|x-y|$ with $C>0$ independent of $x,y$). By $\mathrm{Lip}(\xi, \mathbb R)$ we shall denote all Lipschitz functions in $\mathbb R$ with the Lipschitz constant $\xi$.

The following theorem was first proved by A. Beurling and P. Malliavin, see~\cite{bermal1}.
\begin{theom}
\label{thm-1}(Beurling--Malliavin)
Let $\omega:\mathbb R\rightarrow (0,1]$ be a function such that $\log(1/\omega)\in L^1(\mathbb R, dx/(1+x^2))$, and $\log(1/\omega)$ is a Lipschitz function. Then for each $\delta>0$ there exists a function $f\in L^2(\mathbb R)$, which is not identically zero and satisfying $\mathrm{spec}(f)\subset [0,\delta]$ and $|f(x)|\leq \omega(x)$ for all $x\in \mathbb R$. 
%\end{lemma}
\end{theom}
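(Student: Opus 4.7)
The plan is to produce a nontrivial entire function $g$ of exponential type at most $\delta/2$, with $g \in L^2(\mathbb{R})$ and $|g(x)| \leq \omega(x)$ on $\mathbb{R}$. Setting $f(x) := e^{i\delta x/2}\,g(x)$ then gives $|f| = |g| \leq \omega$, $f \in L^2$, and $\mathrm{spec}(f) \subset [0,\delta]$ by the Paley--Wiener theorem, which is what the theorem demands.

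Write $m(x) := \log(1/\omega(x))$, so $m \geq 0$, $m \in \mathrm{Lip}(\mathbb{R})$, and $m \in L^1(\mathbb{R},dx/(1+x^2))$. As a first step I would record the outer function
\[
O(z) = \exp\!\left(-\frac{1}{\pi i}\int_{\mathbb{R}}\left(\frac{1}{t-z} - \frac{t}{1+t^2}\right) m(t)\,dt\right), \qquad z \in \mathbb{C}_+,
\]
analytic in the upper half-plane with $|O(x)| = \omega(x)$ on the real line. The Lipschitz hypothesis on $m$ makes the harmonic conjugate $\tilde m$ locally bounded with at most logarithmic growth at infinity, giving $O$ enough regularity for the main construction.

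The heart of the argument is to pass from the half-plane function $O$ to an entire function of exponential type $\leq \delta/2$ still dominated by $\omega$ on the real axis. Following the Beurling--Malliavin strategy, I would choose an increasing sequence $\{\lambda_n\} \subset (0,\infty)$ whose counting function $n(t) := \#\{\lambda_k \leq t\}$ has upper density small enough that the canonical product
\[
B(z) = \prod_n \left(1 - \frac{z^2}{\lambda_n^2}\right)
\]
is entire of exponential type $\leq \delta/2$, and is simultaneously arranged so that $\log|B(x)| \leq -m(x) + O(\log(2+|x|))$ on $\mathbb{R}$. The latter requirement amounts to matching the logarithmic potential of the measure $\sum \delta_{\lambda_n}$ to the prescribed function $m$: the condition $m \in L^1(dx/(1+x^2))$ keeps the total density of zeros within the target exponential type, while the Lipschitz regularity of $m$ rules out cancellation defects in $\tilde m$ that would otherwise destroy the pointwise control of $\log|B|$ against $-m$.

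The residual logarithmic defect between $-m$ and $\log|B|$ can then be absorbed by multiplying $B$ by a fixed Paley--Wiener function of arbitrarily small exponential type, for instance $\bigl(\sin(\varepsilon x)/(\varepsilon x)\bigr)^N$ with $N$ chosen large enough to restore $L^2$-integrability; the shift $e^{i\delta x/2}$ produces the desired $f$. I expect the principal obstacle to be the zero-placement itself: constructing $\{\lambda_n\}$ so that $\log|B|$ pointwise majorizes $-m$ up to a tolerable defect requires the full strength of the Lipschitz hypothesis, and this delicate matching between the counting measure of $\{\lambda_n\}$ and the Hilbert transform of $m$ is the genuine analytic content of the First Beurling--Malliavin theorem.
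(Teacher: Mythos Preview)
Your proposal follows a genuinely different route from the paper. The paper obtains Theorem~\ref{thm-1} as the case $r=\infty$ of Theorem~\ref{thm0}, and that argument is real-variable in the Havin--Mashreghi--Nazarov style: with $\Omega=\log(1/\omega)$ one constructs, via the Nazarov lemma (Lemma~\ref{thm1}), a majorant $\Omega_1\geq\Omega$ still in $L^1(dP)$ but with $\|(\mathcal H\Omega_1)'\|_\infty$ arbitrarily small, and then Theorem~\ref{diakon} yields admissibility of $e^{-\Omega_1}\leq\omega$. No entire function is built by hand; the only complex analysis is the outer-function construction hidden inside Theorem~\ref{diakon}.

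Your outline is the classical entire-function strategy, which is a legitimate alternative, but as written it has a gap that is more than just ``the zero placement is not yet carried out''. A canonical product $B(z)=\prod_n(1-z^2/\lambda_n^2)$ with \emph{real} $\lambda_n$ cannot satisfy $\log|B(x)|\leq -m(x)+O(\log(2+|x|))$ pointwise on $\mathbb R$: between consecutive zeros $|B(x)|$ is bounded away from $0$, so wherever $m$ is large and no zero sits nearby the inequality fails outright. (Think of $B(z)=\sin(\pi z)/(\pi z)$: at half-integers $\log|B|$ is only of order $-\log|x|$, while a Lipschitz $m\in L^1(dP)$ can have tall bumps of height $\asymp|x|$ on sparse intervals.) The genuine Beurling--Malliavin constructions---and the later ones of de~Branges, Koosis, Kargaev---do not try to force a single real-zero product below $\omega$; they pass through harmonic majorants, atomization of $\mathcal H m$, or equivalent potential-theoretic devices that your sketch does not supply. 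The outer function $O$ you introduce is also left dangling: it never interacts with $B$, so the ``first step'' does not feed the second. Your target (a nonzero entire function of small type majorized by $\omega$) is correct, but the mechanism you describe would not reach it.
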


For a function $f\in L^2(\mathbb R)$, by $\mathrm{spec}(f)$ we mean the spectrum of $f$, i.e. the support of its Fourier transformation. Note that the spectrum is defined up to a set of the Lebesgue measure zero. Let us also remark that here, the term ``not identically zero'' means ``not zero almost everywhere''. We shall further sometimes write just ``nonzero'' for brevity.

The Beurling--Malliavin theorems are considered by many experts to be among the most deep and important results of the 20th century Harmonic Analysis. Theorem~\ref{thm-1} above is called the Beurling--Malliavin Majorant Theorem (or the First Beurling--Malliavin Theorem). This result gives conditions for the majorant $\omega$ ensuring existence of nonzero function whose spectrum lies in an arbitrary small interval and whose modulus is majorized by $\omega$. This theorem is a crucial tool in the proof of the Second Beurling--Malliavin Theorem about the radius of completeness of an exponential system. Moreover, Theorem~\ref{thm-1} was recently used by J. Bourgain and S. Dyatlov in the theory of resonances for hyperbolic surfaces, see~\cite{bourdyat}. Deep connections of the First Beurling--Malliavin Theorem with nowadays popular gap and type problems are discussed in papers~\cite{borsod},~\cite{polt} and~\cite{makpol}.

Note that Theorem~\ref{thm-1} is in a certain sense a contradiction to the following postulate, called the uncertainty principle: ``It is impossible for a nonzero function and its Fourier transform to be simultaneously very small, unless the function is zero''. Indeed, Theorem~\ref{thm-1} shows that there exist nonzero functions that are ``small'' and whose Fourier transforms are also ``small''. Of course these smallnesses are different from each other and from the smallness in $L^2(\mathbb R)$. So in fact Theorem~\ref{thm-1} does not contradict the most well known variant of uncertainty principle, the Heisenberg inequality. For recent violations of the uncertainty principle of a completely different nature, see papers~\cite{polkis} and~\cite{olev}.

In addition to the original proof of A. Beurling and P. Malliavin, there are many approaches to the proof of Beurling--Malliavin theorems due to H. Redheffer (see~\cite{redh}), L. De Branges (see~\cite{debr}), P. Kargaev (see~\cite{koos2.5}), N. Makarov and A. Poltoratskii (see~\cite{makpol}), to name just some of them. In 2005 V. Havin, J. Mashreghi and F. Nazarov~\cite{nazhav} suggested a new proof of the First Beurling--Malliavin Theorem. An essential novelty of their proof was that it was done by (almost) purely real methods and did not use complex analysis except at one place, see~\cite{nazhav} and the remark right after the formulation of Theorem~\ref{diakon} below. 

\bigskip
Among the goals of the present paper is to give a proof of a new nontrivial generalization of Theorem~\ref{thm-1}. Before stating our main results, we recall some classical definitions and fix some notations.

One of the principle objects of this paper is the class of BM majorants.

\begin{defin}
\label{horoshayamajoranta}
Let $\omega$ be a bounded nonnegative function on $\mathbb R$. This function is called a Beurling--Malliavin majorant (we shall further write ``BM majorant'' to save space), if for any $\sigma>0$ there exists a nonzero function $f\in L^2(\mathbb R)$ such that
$$(a)\, |f|\leq \omega ; \; (b)\, \mathrm{spec} (f) \subset [0,\sigma].$$
The set of all BM majorants will be further referred to as the BM class.
If the conditions $(a)$ and $(b)$ just above are satisfied for a function $\omega$ with some fixed $\sigma>0$, then we call such function $\omega$ a $\sigma$-admissible majorant. If  we replace the condition $(a)$ with a stronger two-sided condition $C\omega\leq|f|\leq \omega$ for some constant $C>0$, then what we get is the definition of a strictly admissible majorant.
\end{defin}

%Let $\beta$be a positive number. 
Recall that the \textit{Poisson measure} $dP$ on $\mathbb R$ is defined by the following formula 
$$dP(x):=\frac{dx}{1+x^2}.$$
%If $\beta=2$, we shall omit the subscript $2$ and simply write $dP$ instead of $dP_2$.
The corresponding weighted Lebesgue space $L^1(dP)$ is the space of all functions $f$ satisfying 
$\int_{\mathbb R}|f|dP<\infty$. The expression $\int_{\mathbb R}\log(1/\omega)dP$ will be sometimes further referred to as the logarithmic integral of $\omega.$

Note that the condition $\log(1/\omega)\in L^1(dP)$ is necessary for $\omega$ to be a $BM$ majorant, but not sufficient, see~\cite{nazhav}. What Theorem~\ref{thm-1} establishes is that some additional regularity suffices for admissibility.

We remind the reader of how one should modify the Cauchy kernel in order to extend the definition of the Hilbert transformation up to the space $L^1(dP).$

\begin{defin}
The \textit{Hilbert transformation} of a function $f\in L^{1}(dP)$ is defined as the following principal value integral
$$\mathcal H f(x):=\dashint_{\mathbb R}\Bigl(\frac{1}{x-t}+\frac{t}{t^2+1}\Bigr)f(t)dt.
$$
%Here, the normalization constant $c_d$ is given by $c_d:=(\pi \gamma_{d-1})^{-1}$, where $\gamma_{d-1}$ is the Euclidean volume of the $(d-1)$-dimensional Euclidean ball. 
It is worth noting that the integral above converges for almost all $x\in\mathbb R$.
\end{defin}

To avoid ambiguity, we stress that this definition coincides, up to an additive constant, with the classical one for functions in $L^1(\mathbb R)$.

Let us now introduce function classes that will play an important role in what follows. To this end, we first define an auxiliary system of intervals: $J_0=[-2,2),$ and for $j\in \mathbb N$, $$J_{j}=[2^{j},2^{j+1}),J_{-j}=[-2^{j+1},-2^{j}).$$

\begin{defin}
\label{opredelenie}
Let $\beta\in (0,1]$. %Consider the following system of intervals: $J_0:=[-2,2),$ and for $j\in \mathbb N, J_{j}:=[2^{j},2^{j+1})$ and $J_{-j}:=[-2^{j+1},-2^{j}).$ 
 If $\beta<1$, then we shall say that an absolutely continuous function $\varphi$ belongs to the class $V_{\beta}$ if $\varphi$ is a $\beta$-H\"older function on the interval $J_j$  with the constant $\kappa_j$ and moreover these constants satisfy
%$$
%\biggl(\sup_{j\in \mathbb Z} \frac{1}{|J_j|}\int_{J_j}|\varphi^{\prime}(x)|^r dx\biggr)^{1/r}
\begin{equation}
\label{ryad}
\left(\sum_{n\in \mathbb Z} 2^{-|j|}\kappa_j^{1/(1-\beta)}\right)^{1-\beta}<\infty.
\end{equation}
%The expression in the left hand part of the last line will be further denoted $\|\varphi\|_{V_{\beta}}.$
In the case when $\beta=1$, we use the following convention $V_{\beta}=\mathrm{Lip}(\mathbb R).$
\end{defin}
% this class is wider then L^r(\mathbb R,dx/(1+|x|))

Note that these classes resemble homogeneous weighted Sobolev spaces. We are going to work with functions that belong to intersections $L^1(dP)\bigcap V_\beta$. From the functional-analytic point of view, these intersections are Banach spaces with respect to the norms $\|\cdot\|_{L^1(dP)} +\|\cdot\|_{V_{\beta}}$.

We are now in position to formulate the first main result of this paper to be proved in the next section.
\begin{theorem}
\label{thm0}
Let $\omega:\mathbb R\rightarrow (0,1]$ be a function such that $\log(1/\omega)\in L^1(dP)$, with $\log(1/\omega)$ absolutely continuous and satisfying $\log(1/\omega)\in V_{\beta}$ for some $\beta\in(0,1]$. Then for each $\delta>0$ there exists a function $f\in L^2(\mathbb R)$, not identically zero, such that $\mathrm{spec}(f)\subset [0,\delta]$ and $|f(x)|\leq \omega(x)$ for all $x\in \mathbb R$. 
%\end{lemma}
\end{theorem}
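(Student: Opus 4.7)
The plan is to reduce Theorem~\ref{thm0} to the First BM Theorem (Theorem~\ref{thm-1}) via a Lipschitz majorization. Given $\omega$ satisfying the hypotheses, set $\varphi := \log(1/\omega)$ and define, for a parameter $L>0$ to be chosen large, the sup-convolution
\[
\varphi^\#(x) := \sup_{y \in \mathbb R}\bigl[\varphi(y) - L|x - y|\bigr].
\]
This $\varphi^\#$ is $L$-Lipschitz and satisfies $\varphi^\# \geq \varphi$, so $\omega^\# := e^{-\varphi^\#} \leq \omega$. If one verifies $\varphi^\# \in L^1(dP)$, then Theorem~\ref{thm-1} applied to $\omega^\#$ produces a nonzero $f \in L^2(\mathbb R)$ with $|f| \leq \omega^\# \leq \omega$ and $\mathrm{spec}(f) \subset [0,\delta]$, completing the proof.

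The verification $\varphi^\# \in L^1(dP)$ is where the hypothesis $\varphi \in V_r$ enters. Writing
\[
\varphi^\#(x) - \varphi(x) = \sup_{y \in \mathbb R}\bigl[\varphi(y) - \varphi(x) - L|x - y|\bigr],
\]
I would split the supremum by the location of $y$. For $x,y$ in the same dyadic interval $J_j$, H\"older's inequality applied to $\varphi' \in L^r(J_j)$ gives
\[
|\varphi(y)-\varphi(x)| \leq |y-x|^{1-1/r}\,|J_j|^{1/r}\,\|\varphi\|_{V_r}.
\]
Optimizing $|J_j|^{1/r}\|\varphi\|_{V_r} t^{1-1/r} - Lt$ in $t>0$ produces the local bound $\varphi^\#(x) - \varphi(x) \lesssim |J_{j(x)}|\,\|\varphi\|_{V_r}^r/L^{r-1}$. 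For $y$ outside $J_j$, iterating the oscillation bound $\mathrm{osc}_{J_k}\varphi \lesssim |J_k|\,\|\varphi\|_{V_r}$ shows that choosing $L$ larger than a fixed multiple of $\|\varphi\|_{V_r}$ forces the penalty $L|x-y|$ to dominate, so ``far'' $y$ contribute nothing to the supremum.

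The main obstacle is that the naive $dP$-integration of the local bound diverges: on each dyadic scale, $\int_{J_j}|J_j|\,\|\varphi\|_{V_r}^r / L^{r-1}\,dP \sim \|\varphi\|_{V_r}^r/L^{r-1}$, an $O(1)$ contribution that summed over $j \in \mathbb Z$ is infinite. This reveals that a single global Lipschitz parameter $L$ cannot simultaneously majorize $\varphi$ and preserve integrability against $dP$. One resolution is a refined, scale-adapted construction in which the effective Lipschitz parameter $L_j$ grows mildly with $|j|$, compensating the accumulated oscillation at each scale while keeping $\varphi^\#$ globally Lipschitz in a suitable averaged sense; an alternative route is to bypass the reduction entirely and directly adapt the real-variable construction of Havin--Mashreghi--Nazarov~\cite{nazhav} to the $V_r$ setting, replacing each Lipschitz estimate by its H\"older-type $V_r$ counterpart.

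In either route, the critical point is the convergence of a dyadic geometric series of ratio $\sim 2^{-1/r'}$, where $r'$ is the H\"older conjugate of $r$; this convergence holds precisely when $r>1$, consistent with the sharpness claim announced in the abstract. The technical heart of the argument is therefore a careful balancing of the Lipschitz and $L^1(dP)$ requirements, using this dyadic gain from $r>1$ to close all the relevant summations.
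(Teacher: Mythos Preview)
Your primary route---Lipschitz majorization of $\varphi=\log(1/\omega)$ by a sup-convolution $\varphi^\#$, followed by an appeal to Theorem~\ref{thm-1}---is not merely technically delicate; it is impossible. The paper addresses exactly this reduction in the Remark preceding Proposition~\ref{myau} and then rules it out: Proposition~\ref{myau} exhibits, for every $r>1$, a function $\Omega\in V_r\cap L^1(dP)$ with $\Omega(2^n)\ge 2^n/\sqrt n$, while the next proposition shows that \emph{no} Lipschitz function in $L^1(dP)$ can satisfy that growth. Hence there exist $\varphi\in V_r\cap L^1(dP)$ admitting no Lipschitz majorant in $L^1(dP)$ at all, so no choice of $L$ (or scale-adapted $L_j$ that still produces a genuinely Lipschitz $\varphi^\#$) can work. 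The divergence you noticed is therefore a structural obstruction, not a bound to be improved.

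Your fallback---``directly adapt the Havin--Mashreghi--Nazarov construction to the $V_r$ setting''---is indeed what the paper does, but the target is different from what your outline suggests. The paper does \emph{not} try to make the majorant $\Omega_1\ge\Omega$ Lipschitz; instead it arranges that the \emph{Hilbert transform} $\mathcal H\Omega_1$ is Lipschitz with arbitrarily small constant (Lemma~\ref{thm1}), and then invokes Theorem~\ref{diakon} rather than Theorem~\ref{thm-1}. The new technical core is a Hadamard--Landau type inequality (Lemma~\ref{eprst}) giving $\|f\|_{L^\infty(a)}\lesssim(\int_a f)^{\alpha/2}(\int_a|f'|^r)^{(1-\alpha/2)/r}$ with $\alpha=(2r-2)/(2r-1)$; this feeds into a H\"older step with exponents $1/\alpha$ and $1/(1-\alpha)$ that closes the $L^1(dP)$ estimate for $\Omega_1$. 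The place where $r>1$ is used is precisely that $\alpha\in(0,1)$, which is not the $2^{-1/r'}$ geometric series you conjectured.
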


\begin{rem}
We would like to stress that one can replace the intervals $J_j$ in the definition of the spaces $V_\beta$ with any system of intervals $[\lambda_j,\lambda_{j+1})$ where $\{\lambda_j\}$ is  any sequence of reals satisfying $\lambda\leq \lambda_{j+1}/\lambda_j \leq\Lambda$ with $1<\lambda<\Lambda<\infty$, in a way that the corresponding version of Theorem~\ref{thm0} holds true.
\end{rem}

\begin{rem}
Throughout this paper, $\Omega$ will mean $\log(1/w)$ for a function $\omega:\mathbb R\rightarrow (0,1]$.
\end{rem}

In order to get some intuition of what an ``typical'' function satisfying  $\Omega\in L^1(dP)$ and $\Omega\in V_\beta$ looks like, the reader is welcomed to think of a function, whose graph consists of an infinite number of ``pits'' and ``hills'', see the pictures of Section 1.5 in paper~\cite{nazhav} and Figure~\ref{kartinka} below. Of course, the same intuition applies to the functions with Lipschitz logarithm and finite logarithmic integral (i.e. those satisfying the conditions of the First Beurling--Malliavin Theorem). However, we shall shortly see that there are drastic differences between these classes of functions. 

\begin{center}
\begin{figure}
\begin{tikzpicture}
  \draw[black, ultra thick, ->] (0, -0.5) -- (0, 4.5) node[right]{$\Omega(x)$};
  \draw[black, ultra thick, ->] (-0.5, 0) -- (10, 0) node[below]{$x$};
  \fill [black] (canvas cs:x=0cm,y=4cm)   circle (2.5pt);
  \fill [black] (canvas cs:x=5cm,y=0cm)   circle (2.5pt);
    \fill [black] (canvas cs:x=9cm,y=0cm)   circle (2.5pt);
  \draw[dotted] (0, 0) grid (9, 4);
\node[left] at (0, 4)    {$\frac{2^n}{n}$};
\node[below] at (5, 0)    {$2^n$};
\node[below] at (9, 0)    {$2^{n+1}$};
  \draw[black, line width = 0.5mm, black]   plot[smooth, domain=0:4] ({5.3+0.1 * sqrt(\x)}, \x);
    \draw[black, line width = 0.5mm, black]   plot[smooth, domain=0:4] ({5.7-0.1 * sqrt(\x)}, \x);
    
     \draw[black, line width = 0.5mm, black]   plot[smooth, domain=0:4] ({6.3+0.1 * sqrt(\x)}, \x);
    \draw[black, line width = 0.5mm, black]   plot[smooth, domain=0:4] ({6.7-0.1 * sqrt(\x)}, \x);
      \draw[black, line width = 0.5mm, black]   plot[smooth, domain=0:4] ({8.3+0.1 * sqrt(\x)}, \x);
      \fill[black] (7.3, 0.1) circle (0.05);
       \fill[black] (7.5, 0.1) circle (0.05);
        \fill[black] (7.7, 0.1) circle (0.05);
    \draw[black, line width = 0.5mm, black]   plot[smooth, domain=0:4] ({8.7-0.1 * sqrt(\x)}, \x);
  % \draw[black, line width = 0.1mm, green!80!blue]   plot[smooth, domain=0:4] (2+0.01 * \x * \x, \x);
    % \draw[black, line width = 0.1mm, green!80!blue]   plot[smooth, domain=0:4] (\x * \x, \x);
  %\fill[green!80!blue] (2, 2) circle (0.2) node[below, outer sep = 5pt, black]{$(x, y)$};
  %\fill[red] (1, 4) circle (0.2) node[right, outer sep = 5pt, black]{$(1, 4)$};
 % \draw[red, dashed, thick] (1, 4) -- (2, 2) node[midway, left, outer sep = 5pt]{$d$};

  %\foreach \x in {1, 2, 3} \node[below] at (\x, 0){$\x$};
  %\foreach \y in {0, 1} \node[left] at (0, \y){$\y$};
\end{tikzpicture}
  \caption{A ``typical'' function in $V_\beta$}
  \label{kartinka}
\end{figure}
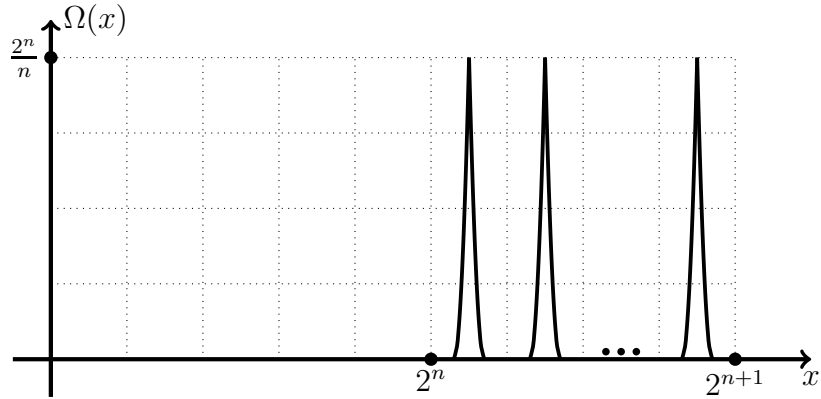
\end{center}

Indeed, let us compare our sufficient condition of Theorem~\ref{thm0} with these already known. First, it is obvious that our theorem is a generalization of the First Beurling--Malliavin Theorem, since it is a particular case of our result that corresponds to $\beta=1$. %Moreover, the new case $\beta\in (0,1)$ can not be derived from the Beurling--Malliavin Theorem in view of the following result. 

%\begin{prop}
%\label{prop4}
%For any $\beta\in (0,1)$, there are functions $\Omega:\mathbb R\rightarrow \mathbb R_+$ satisfying $\Omega\in L^1(dP)\cap V_\beta$  that can not be majorized by a function in $L^1(dP)\cap\mathrm{Lip}(\mathbb R)$.
%\end{prop}

%Indeed, one might think that Theorem~\ref{thm0} could be derived in full generality directly from Theorem~\ref{thm-1} in the following way. Take a function $\omega$ as in Theorem~\ref{thm0}. If there was another function $\varpi$ such that: $\log(1/\varpi)$ is Lipschitz, $\log(1/\varpi) \in L^1(dP)$ and $\log(1/\omega)\geq \log(1/\varpi)$, then, obviously, we would have $\omega\geq \varpi$, and, thanks to Theorem~\ref{thm-1}, $\omega$ would be $\sigma$-admissible for any $\sigma>0$. However, this argument does not apply (at least as is), in view of Proposition~\ref{prop4}.

A ``typical'' function in classes $V_\beta$ is visualized at Figure~\ref{kartinka}.

There are many other sufficient ``regularity'' conditions for the admissibility, see for instance those contained in~\cite{koos1},~\cite{koos2},~\cite{belhav}.  However, all these conditions are either imposed on the Hilbert transform of $\Omega$, or they claim that only some regularization or some minorant of $\omega$ is admissible.   
For instance, if the condition $(\log\omega(\cdot))/(1+(\cdot)^2)^{1/2}\in \dot{W}^{1/2,2}(\mathbb R)$ is fulfilled for a function $\omega$ that has convergent logarithmic integral, then some regularization of  this function is an admissible majorant, see~\cite{bermal1}. 
%then  necessarily belongs to the homogeneous fractional Sobolev--Slobodecki space $\dot{W}^{1/2,2}(\mathbb R)$.  %satisfy the so-called Dirichlet energy condition 
 %is fulfilled for some function 
%and if on top of that $\omega$ 
%condition is verified by a function $\omega$, which 
 %that has convergent logarithmic integral, then some regularization of %$\omega$ this function is an admissible majorant, see~\cite{bermal1}.

\bigskip

Note that the following approximation property of the spaces $V_\beta$ with $\beta\in(0,1]$ is a direct consequence of our Theorem~\ref{thm0}.
\begin{cor}
By a theorem of A. Baranov and V. Havin (see paper~\cite{barhav}, Section 6), we get that for any $\beta\in(0,1], \sigma>0$, and any $\omega\in V_\beta$ the space of all functions in $L^2(\mathbb R)$ with the spectrum in $\mathbb R \backslash [0,\sigma]$ is not dense in the weighted Lebesgue space $L^1(\omega)$.
\end{cor}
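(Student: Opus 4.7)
My plan is to combine Theorem~\ref{thm0} with the duality/completeness result of Baranov and Havin from Section~6 of~\cite{barhav}. I interpret the hypothesis ``$\omega\in V_r$'' in the natural way consistent with the rest of the paper, namely that $\log(1/\omega)\in L^1(dP)\cap V_r$, so that Theorem~\ref{thm0} applies directly to $\omega$.

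The first step is to invoke Theorem~\ref{thm0} to obtain, for each $\sigma>0$, a nonzero $f_\sigma\in L^2(\mathbb R)$ with $|f_\sigma|\leq \omega$ and $\mathrm{spec}(f_\sigma)\subset[0,\sigma]$. Thus $\omega$ is a $\sigma$-admissible majorant for every $\sigma>0$, which is exactly the input the Baranov--Havin theorem requires.

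The second step is the standard annihilator/Hahn--Banach argument behind the cited Baranov--Havin theorem. Fix $\sigma>0$ and consider the functional $\Lambda_\sigma(g):=\int_{\mathbb R} g(x)\,\overline{f_\sigma(x)}\,dx$. The pointwise bound $|f_\sigma|\leq\omega$ gives $|\Lambda_\sigma(g)|\leq\|g\|_{L^1(\omega)}$, so $\Lambda_\sigma$ is a bounded linear functional on $L^1(\omega)$. For any $h\in L^2(\mathbb R)$ with $\mathrm{spec}(h)\subset\mathbb R\setminus[0,\sigma]$, the spectra of $h$ and $f_\sigma$ are disjoint, so Plancherel's identity forces $\Lambda_\sigma(h)=0$. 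On the other hand, since $\omega>0$ everywhere, the set where $f_\sigma\ne 0$ has positive measure, and testing $\Lambda_\sigma$ on a bounded truncation of $\overline{f_\sigma}$ (which certainly lies in $L^1(\omega)$) produces a nonzero value. Hence $\Lambda_\sigma$ separates the spectral subspace in question from the whole of $L^1(\omega)$, proving non-density.

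The only real obstacle is to invoke the correct form of the Baranov--Havin theorem from Section~6 of~\cite{barhav}, which packages exactly the duality just sketched. Once that external result is quoted correctly, the corollary follows immediately from Theorem~\ref{thm0}, since Theorem~\ref{thm0} supplies the very admissibility hypothesis that the Baranov--Havin theorem converts into the non-density conclusion.
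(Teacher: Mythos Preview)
Your proposal is correct and follows exactly the route the paper indicates: the paper gives no detailed proof of this corollary, merely stating that it is a direct consequence of Theorem~\ref{thm0} together with the Baranov--Havin result from Section~6 of~\cite{barhav}. You reproduce precisely this logic, correctly interpreting the sloppy ``$\omega\in V_r$'' as the hypothesis of Theorem~\ref{thm0}, and you additionally spell out the annihilator argument underlying the cited Baranov--Havin theorem, which is a welcome elaboration rather than a deviation.
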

%We hope that our Theorem~\ref{thm0} will find other applications is harmonic analysis.

We hope that our main results will find other applications in Harmonic and Complex Analysis, in particular for the uncertainty principle and for 
%in theories of uncertainty principle and of 
 exponential systems.

%Let $\mathrm{Lip}(\mathbb R,\kappa)$ denote all Lipschitz functions in $\mathbb R$ with the Lipschitz constant $C$ satisfying $C\leq \kappa$. 

The main step of the proof of Theorem~\ref{thm0} is the following lemma.
\begin{lemma}(a new variant of the global Nazarov lemma)
\label{thm1}
Let %$1<r$ , 0<\beta$ and let 
$0<\beta\leq 1$.
%$r\alpha-2r+1+\beta r -\beta<0$ 
 Suppose that $\Omega\in L^1(dP)\cap V_{\beta}$ is positive.  
%absolutely continuous function such that $\Omega\in V_r$. 
 Then, for each $\varepsilon > 0$ there exists a function $\Omega_1$, satisfying 
\begin{enumerate}[label=(\Alph*)]
\item \label{a} $\Omega(x) \leq \Omega_1(x)$ for all $x\in \mathbb R;$
\item \label{b} $\Omega_1 \in L^1(\mathbb R, dx/(1+x^2));$
%\item $\Omega_1 \in \mathrm{Lip}(\varepsilon, \mathbb R^2);$
\item \label{c} $\mathcal H\Omega_1 \in \mathrm{Lip}(\varepsilon,\mathbb R),$ where $\mathcal H$ is the Hilbert transform on the real line.
\end{enumerate}
%\end{lemma}
\end{lemma}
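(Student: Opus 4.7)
My plan is to generalize the proof of the classical global Nazarov lemma (the $r=\infty$ case) from~\cite{nazhav}, systematically replacing the uniform bound $\|\Omega'\|_\infty\leq M$ used there by the dyadic $L^r$-bounds $\int_{J_j}|\Omega'|^r\,dx\leq M^r|J_j|$ that are built into the definition of $V_r$. After a mollification reducing to the case of smooth $\Omega$, I would look for a nonnegative additive correction $E\geq 0$ such that $\Omega_1:=\Omega+E$ satisfies \ref{a}--\ref{c}; since formally $(\mathcal H\Omega_1)'=\mathcal H(\Omega'+E')$, condition \ref{c} becomes $\|\mathcal H(\Omega'+E')\|_\infty\leq\varepsilon$, and \ref{b} becomes a Poisson-integrability condition on $E$.

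I would then recast the existence of such an $E$ via Hahn--Banach duality as the following integral inequality: for every signed function $\psi$ in a suitable polar set (with $\mathcal H\psi\in L^\infty$ of small norm and appropriate Poisson-type decay), one should have
$$
\biggl|\int_{\mathbb R}\Omega'\,\psi\,dx\biggr|\leq c(\varepsilon)\,\bigl(\|\Omega\|_{L^1(dP)}+\|\Omega\|_{V_r}\bigr),
$$
with $c(\varepsilon)\to 0$ as $\varepsilon\to 0$; the integration by parts that produces the $\Omega'$ is allowed by the absolute continuity of $\Omega$. Decomposing the integral over the dyadic annuli $J_j$ and using H\"older gives
$$
\biggl|\int_{J_j}\Omega'\,\psi\,dx\biggr|\leq \|\Omega\|_{V_r}\,|J_j|^{1/r}\,\|\psi\|_{L^{r'}(J_j)},
$$
so that the entire problem reduces to controlling $\sum_j|J_j|^{1/r}\|\psi\|_{L^{r'}(J_j)}$ purely in terms of $\|\mathcal H\psi\|_\infty$ and a Poisson-type norm of $\psi$.

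The main obstacle is this last bound: because the Hilbert transform is nonlocal, the $L^{r'}(J_j)$ norms of $\psi$ at different scales are coupled through $\mathcal H\psi$. I would handle this by a dyadic atomic decomposition $\psi=\sum_k\psi_k$ with $\psi_k$ supported in a slight enlargement of $J_k$, using the $L^{r'}$-boundedness of $\mathcal H$ to recover $\|\psi_k\|_{L^{r'}(J_k)}$ from $\mathcal H\psi_k$ locally---this is where the hypothesis $r>1$ (hence $r'<\infty$) genuinely enters---and absorbing the off-diagonal interactions between different annuli through the pointwise $|x-y|^{-1}$ decay of the Cauchy kernel together with the geometric spacing of the $J_k$. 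The endpoint case $r=\infty$ would simply be quoted from~\cite{nazhav} rather than re-proved. Finally, a careful quantitative tracking of how $\|\mathcal H\psi\|_\infty$ enters every step should yield a constant $c(\varepsilon)$ that vanishes as $\varepsilon\to 0$, completing the proof.
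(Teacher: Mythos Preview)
Your approach is fundamentally different from the paper's, and as written it has a genuine gap at the duality step. The paper never uses Hahn--Banach: it \emph{constructs} $\Omega_1$ explicitly. First a local lemma (Lemma~\ref{local}) is proved on a single interval $I$: one covers $\{f>0\}$ by maximal ``essential'' dyadic subintervals, augments them with Nazarov's tails to obtain a regularized family $\tau$, and sets $F:=\sum_{a\in\tau}\phi_a$ with $\phi_a$ scaled bump functions. The $V_r$ hypothesis enters through a Hadamard--Landau--type inequality (Lemma~\ref{eprst}), $\|f\|_{L^\infty(a)}\lesssim(\int_a f)^{\alpha/2}(\int_a|f'|^r)^{(1-\alpha/2)/r}$ with $\alpha=\tfrac{2r-2}{2r-1}$, which replaces the uniform Lipschitz bound from~\cite{nazhav} and yields the crucial estimate $\int F\lesssim(\delta/\kappa)^{-2r/(2r-1)}(\int_I f)^{\alpha}$. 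For the global lemma one first shows $\Omega(x)\lesssim\varepsilon|x|$ (again via Lemma~\ref{eprst}), then applies the local lemma on each $J_j$ to get $F_j$, and sets $\Omega_1=\sum_j F_j$; properties \ref{a}--\ref{c} are checked directly. So your opening sentence is misleading: the proof in~\cite{nazhav} that you say you are generalizing is itself constructive, not dual.

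The gap in your proposal is that the Hahn--Banach step is not set up, and it is not clear it can be. You have three heterogeneous constraints on $\Omega_1=\Omega+E$: the cone condition $E\ge 0$, membership in $L^1(dP)$, and $\mathcal H\Omega_1\in\mathrm{Lip}(\mathbb R,\varepsilon)$. You assert these collapse, via separation, to a single inequality $|\int\Omega'\psi|\le c(\varepsilon)(\|\Omega\|_{L^1(dP)}+\|\Omega\|_{V_r})$ for $\psi$ in some polar set, but you do not specify the ambient Banach space, the pairing, or why the relevant convex sets satisfy the interior/closedness hypotheses that Hahn--Banach needs; note for instance that the set $\{f\in L^1(dP):\mathcal H f\in\mathrm{Lip}(\mathbb R,\varepsilon)\}$ contains all constants, which already makes a naive separation argument degenerate. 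Even granting the dual formulation, the heart of the matter is why $c(\varepsilon)\to 0$: your sketch bounds $\sum_j|J_j|^{1/r}\|\psi\|_{L^{r'}(J_j)}$ via an atomic decomposition of $\psi$ and $L^{r'}$-boundedness of $\mathcal H$, but nothing in that outline explains how \emph{smallness} of $\|\mathcal H\psi\|_\infty$ forces smallness of this sum --- the $L^{r'}$ bound for $\mathcal H$ gives a fixed constant, not one tending to zero. Without that mechanism the argument does not close. (A smaller point: the identity $(\mathcal H\Omega_1)'=\mathcal H(\Omega_1')$ you invoke is delicate for functions merely in $L^1(dP)$; the paper avoids it by differentiating the Cauchy kernel directly.)
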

Indeed, Theorem~\ref{thm0} follows from Lemma~\ref{thm1}, thanks to the following sufficient condition for a function to be a $BM$ majorant, which is a consequence of a more general result, proved by Mashreghi and Havin.
\begin{theom}
\label{diakon}
If $\omega: \mathbb R\rightarrow (0,1],\; \log(1/\omega)\in L^1(dP)$ and $\|(\mathcal H \log(1/\omega))^{\prime}\|_{\infty}< \pi\sigma,$
then $\omega$ is a $\sigma$-admissible majorant.
\end{theom}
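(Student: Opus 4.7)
Set $u:=\log(1/\omega)\geq 0$; by hypothesis $u\in L^1(dP)$ and $v:=\mathcal H u$ satisfies $\|v'\|_\infty<\pi\sigma$. I would construct the required $f$ via the classical outer-function machinery in the upper half-plane $\mathbb C_+=\{\mathrm{Im}\,z>0\}$. Since $u\in L^1(dP)$, the modified Schwarz integral
\[
\Phi(z):=\frac{1}{\pi i}\int_{\mathbb R}\Bigl(\frac{1}{t-z}+\frac{t}{t^2+1}\Bigr)\,u(t)\,dt,\qquad z\in\mathbb C_+,
\]
converges and is holomorphic in $\mathbb C_+$, with $\mathrm{Re}\,\Phi$ equal to the Poisson integral of $u$ and boundary values $\Phi(x)=u(x)+iv(x)$ almost everywhere on $\mathbb R$. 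Setting $M(z):=e^{-\Phi(z)}$ yields the outer function in $H^\infty(\mathbb C_+)$ with $|M(z)|\leq 1$ in $\mathbb C_+$ (because $u\geq 0$), $|M(x)|=\omega(x)$ on $\mathbb R$, and $\arg M(x)=-v(x)$.

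The heart of the proof is then to promote $M$ to an entire function of exponential type $\leq\sigma$ that is square-integrable on $\mathbb R$. I would pair $M$ with a canonical product $E$ on a real zero-sequence $\{\lambda_n\}_{n\in\mathbb Z}$, defined as the solutions of an equation of the form $\pi\sigma\lambda_n-v(\lambda_n)=n\pi+c$ for a well-chosen constant $c$. The Lipschitz hypothesis $\|v'\|_\infty<\pi\sigma$ ensures that the phase $\pi\sigma x-v(x)$ is strictly increasing with derivative in a compact subinterval of $(0,2\pi\sigma)$, so the $\lambda_n$ form a well-defined real sequence of density $\sigma/\pi$ satisfying a Blaschke-type condition (obtained from $u\in L^1(dP)$ via the exponential drop of $\omega$ away from these points). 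The canonical product $E(z):=\prod(1-z/\lambda_n)e^{z/\lambda_n}$ is then entire of exponential type $\leq\sigma$ (using the density–type scaling $\mathrm{type}=\pi\cdot\mathrm{density}$), and its argument on $\mathbb R$ cancels $\arg M(x)=-v(x)$ modulo $\pi$, so that
\[
f(z):=\frac{M(z)\,E(z)}{(z+i)^N}
\]
for a sufficiently large integer $N$ is holomorphic in $\mathbb C_+$ and extends entirely across $\mathbb R$.

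Finally I would verify: (i) $|f(x)|\leq\omega(x)$ from $|M|_{\mathbb R}=\omega$ and the uniform boundedness of $E/(z+i)^N$ on the line (the $\lambda_n$ being calibrated precisely for this); (ii) $\mathrm{spec}(f)\subset[0,\sigma]$ via Paley–Wiener applied to $f$ (exponential type $\leq\sigma$ combined with the one-sided $H^\infty(\mathbb C_+)$ provenance of the outer factor $M$); and (iii) $f\not\equiv 0$, because $M$ is nonvanishing in $\mathbb C_+$. I expect the main obstacle to be exactly the matching step of the second paragraph: explicitly constructing $E$ so that $M\cdot E$ glues into an entire function requires a careful choice of zero-sequence, and it is here that the strict inequality $\|v'\|_\infty<\pi\sigma$ enters twice — once to make the defining phase strictly monotone, and once to keep the resulting zero density strictly below the Paley–Wiener threshold so that the exponential type of $E$ does not exceed $\sigma$. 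This calibration is essentially the content of the more general Mashreghi--Havin result cited in the excerpt.
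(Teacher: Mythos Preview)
The paper does not actually prove this theorem. Theorem~\ref{diakon} is stated as a known result, attributed to Mashreghi and Havin (via \cite{hm1}, \cite{hm2}, \cite{nazhav}), and is used as a black box to deduce Theorem~\ref{thm0} from Lemma~\ref{thm1}. The only commentary the paper offers is the one-sentence Remark immediately following the statement: the proof relies on the outer-function construction from Hardy-space theory, i.e., on producing an analytic function in the upper half-plane whose boundary modulus is the prescribed $\omega$.

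Your sketch is fully consistent with that remark and is, in outline, the Mashreghi--Havin argument: build the outer function $M$ with $|M|=\omega$ on $\mathbb R$ and $\arg M=-\mathcal H\log(1/\omega)$, then exploit the Lipschitz bound on the argument to manufacture a compensating factor that forces the spectrum into $[0,\sigma]$. So at the level of ``what approach is taken,'' you and the paper (via its citation) agree.

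That said, your second paragraph is where the real work hides, and your version of it is not quite how the cited proof runs. In \cite{nazhav} the passage from the outer function to an admissible $f$ does not go through a hand-built canonical product $E$ on an ad hoc zero set; rather, one observes that $e^{i\pi\sigma x}M(x)$ has increasing argument on $\mathbb R$ (this is exactly where $\|v'\|_\infty<\pi\sigma$ enters), and then invokes a representation/atomization lemma for outer functions with monotone argument to land in the Paley--Wiener space $PW_\sigma$. Your canonical-product route could in principle be made to work, but the step ``$f=ME/(z+i)^N$ extends entirely across $\mathbb R$'' is not automatic: $M$ lives only in $\mathbb C_+$, and cancelling its boundary argument by real zeros of $E$ does not by itself produce an entire extension. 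You correctly flag this matching step as the obstacle; it is genuinely the nontrivial part, and the literature handles it differently from what you outline.
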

\begin{rem}
The proof of Theorem~\ref{diakon} uses a one-dimensional construction coming from the classical (complex) theory of Hardy spaces on the unit circle. Namely, given a nonnegative function on the unit circle with convergent logarithmic integral there exists an analytic function whose modulus coincides with the former function. Such functions are called outer, see~\cite{nikol} for details.
\end{rem}
For necessary conditions for $\sigma$-admissible majorants, see papers~\cite{bel1} and~\cite{bel2} by Y. Belov and  paper~\cite{barhav} by A. Baranov and V. Havin.

We briefly discuss main ideas lying behind our proof of Lemma~\ref{thm1}. Our proof is inspired by that of the Nazarov Lemma from paper~\cite{nazhav}. Indeed, we use the beautiful idea of a so-called regularized system of intervals, which was first introduced by F. Nazarov and fruitfully used in~\cite{nazhav}. Another important feature of the proof of the Nazarov Lemma in~\cite{nazhav} is a version of the Hadamard--Landau inequality. We have had to modify this result drastically in order for it to fit the conditions of our Lemma~\ref{thm1}. This culminated in Lemma~\ref{eprst} of the present paper. On top of that, most estimates from the proof in~\cite{nazhav} become considerably harder under our assumptions, in comparison to the Lipschitz condition from~\cite{nazhav}.

Note that Nazarov's Lemma is by itself a highly nontrivial and very interesting result in Harmonic Analysis. To illustrate this, we mention paper~\cite{sz} by P. Zatitsky and D. Stolyarov, where the authors have utilized the main object of the Nazarov Lemma, the regularized system of intervals, in some special form. For a multidimensional version of the classical Nazarov Lemma, see our paper~\cite{vasil}. 

\bigskip

%TUT VSYO CHTO NIZHE NADO PEREPISAT'

Let us now discuss the second main result of this article. Our Theorem~\ref{thm1.5}, gives an answer to the following question: ``How sharp is the result of Theorem~\ref{thm0} ?'' The answer to this question is given in the following result.
% and the main reason why is, vaguely speaking, that a function in the class $V_1$ can have linear growth at sufficiently long intervals, tending to infinity. This is inconsistent with the following theorem.

\begin{theorem}
\label{thm1.5}
For any $\beta\in (0,1)$, there are functions $\omega:\mathbb R\rightarrow (0,1]$ satisfying $\log(1/\omega)\in  L^1(dP)$ and $2^{-|j|}\kappa_j^{1/(1-\beta)}\asymp 1$ in the notations of Theorem~\ref{thm0}, that are not $BM$ majorants.
%Let $\omega$ be a continuous function. If there exists a sequence of real numbers $\{x_n\}_{n\in\mathbb N}$ such that $0<x_1<x_2<\ldots, \, \lim_{n\rightarrow \infty}x_n=\infty$ and $\omega(x_n)\leq e^{-x_n}$ for all natural $n$, then $\omega$ is not a $\mathrm{BM}$ majorant.
%$\omega\equiv 0.$
\end{theorem}

\begin{comment}
In fact, in this case, both alternatives ($\omega$ \textit{in} and \textit{not in} $BM$ class) may occur. Indeed the function $\omega\equiv 1$ gives an example of a majorant in $L^1(dP)\cap V_1$ which is a $BM$ majorant.
%This can be easily derived from our Theorems~\ref{thm0} and~\ref{thm1.5} and the observation that $V_r\subset V_1$ once $r>1$.  
However, in the space $V_1$, there are $BM$ majorants that  \textit{do not belong} to  any class $V_r$ with $r>1.$
\end{comment}

%The main reason why Theorem~\ref{thm1.5} holds is, vaguely speaking, that functions from the formulation of this result can have linear growth at sufficiently long intervals, tending to infinity. TUT PEREPISAT FRAZU  
%
%Indeed, it is not hard to give an example of a system of intervals $\{\mathcal J_j\}_{j\in \mathbb Z}$ and the corresponding space $\mathcal V_r$ which is strictly wider than $V_r.$ Moreover, one can give an example of a Hadamard lacunary sequence of positive reals, that can not play the role of the lengths of intervals $\{J_j\}$ from the definition of spaces $V_r$, see the two remarks right after the proof of Theorem~\ref{thm1.5}. The same applies to the factor in front of the integrals in this definition. The reason why it is so is that under such general conditions the function $\Omega$ need not have linear growth at infinity, whereas this is indispensable within our framework, see estimate~\eqref{rostnabesk} in our Section 2 below. However, in many cases, the corresponding functions growing faster than linear at infinity will actually not be $BM$ majorants, see the second remark in our fourth section. 

We remark that %the discussion just above and 
our Theorem~\ref{thm1.5} shows that the condition $\log(1/\omega)\in V_\beta$ in our Theorem~\ref{thm0} is sharp in a number of senses.

The proof of Theorem~\ref{thm1.5} builds upon one construction from paper~\cite{belhav}. This construction says that smallness of a bandlimited function is ``contagious'': if such a function is small on an interval it is also small on a much larger concentric interval. This construction is due to A. Borichev and it works only for majorants that have a growth strictly greater than linear at a sequence tending to infinity. Majorants that appear in the formulation of Theorem~\ref{thm1.5} have at most linear growth at infinity. Nevertheless, for some of these majorants, we were able to use a combination of Borichev's construction with an iteration method to prove Theorem~\ref{thm1.5}.

\begin{comment}
The very same ``stability of smallness'' argument of Borichev yields the fact that ``typical'' functions in classes $V_r\backslash \mathrm{Lip}(\mathbb R)$ are not strongly admissible majorants, see~\cite{bel3} for a definition of the strong admissibility. This is discussed in our third remark after the proof of Theorem~\ref{thm1.5}.
\end{comment}

\bigskip
The paper is organized as follows. Theorem~\ref{thm0} is proved in Sections 2 and 3. The fourth Section is devoted to the proof of Theorem~\ref{thm1.5}. %The elementary proof of Proposition~\ref{prop4} %is given in the Appendix. %
is left to the reader and are not detailed here. %The Appendix contains examples of functions verifying conditions of our Theorem~\ref{thm0} and whose weighted logarithms have unbounded fractional homogeneous Sobolev space $\dot{W}^{1/2,2}(\mathbb R)$ seminorms.
%mentioned above are given in the Appendix, which is our Section 5.

We finally mention some open questions concerning Theorems~\ref{thm0} and~\ref{thm1.5}. The first question consists in determining whether the condition $\log(1/\omega)\in V_\beta$ in Theorem~\ref{thm0} can be weakened down to, roughly speaking, a condition of the kind ``$\omega$ belongs to some Orlicz type class, defined in the spirit of $V_\beta$ classes''. The second question concerns the system of intervals that are used in the definition of the spaces $V_\beta.$ Namely, we would like to find a necessary and sufficient condition on the system of intervals instead of the dyadic system in Definition~\ref{opredelenie}, for which the first theorem still holds.
%Indeed, still open is the question of finding a necessary and sufficient condition on the alluded system of intervals, under which we have the first theorem. 
Yet another question is to find a multidimensional version of Theorem~\ref{thm0} which seems unavailable at the present time, according to~\cite{schlhan}. The fourth and the final question reads as follows. It would be also interesting to find counterparts of the main results of this paper in the context of the so-called model spaces, in spirit of Yu. S. Belov's early papers. The author plans to attack the aforementioned questions in the nearest future.
 
\subsection*{Acknowledgments}
The author is deeply grateful to  Anton D. Baranov, Alexander A. Borichev, Evgueni S. Doubtsov, Konstantin M. Dyakonov, Sergei V. Kislya\-kov, and to Yiyu Tang for a number of helpful discussions. The author would also like to thank the anonymous referees for the careful reading of the paper and for a number of helpful suggestions.

\section{A new local Nazarov lemma}

We accumulate here the list of the frequently used technical abbreviations and notations. For an interval $a\subset \mathbb R$ its length is denoted by $l(a)$, $c_a$ will stand for the center of $a$ and $\lambda a$ with $\lambda$ positive will be the interval centered at $c_a$ and whose edge length equals $\lambda l(a)$. Let $I$ be an interval on the real line. We will denote by $T_{I}(x)$ the distance from $x\in \mathbb R$ to $\mathbb R \backslash I$. For a dyadic interval $b$, we will denote by $b^\sharp$ the dyadic parent of $b$. Throughout this paper, $I^*$ will denote the unit interval $[-1/2,1/2]$. For $\beta\in (0,1)$, we denote $\mathrm{Hol}_\beta(\kappa,I)$ the class of $\beta$-H\"older functions on the interval $I$, with the constant $\kappa$, i.e. all $f$ defined on $I$ such that for all $x\in I$ and $y\in I$ holds $|f(x)-f(y)|\leq \kappa|x-y|^\beta$.

The main step of the proof of our new global Nazarov lemma is its following local variant. 
\begin{lemma}(a new local Nazarov lemma)
\label{local}
%Let $Q^*=[-1/2,1/2]^2\subset \mathbb R^2$
%числа $\delta \in (0,1)$ и 
%and let $\kappa\geq 1$ be a real number. Suppose that $f\in \mathrm{Lip}(\kappa,Q^*)$ be a nonnegative function such that $\|f\|_{L^\infty(Q^*)}\leq 1.$ Then there exists a function $F\in C^{\infty}(\mathbb R^2),$ such that
Let $I\subset \mathbb R$ be an interval and let $\beta\in(0,1]$. Suppose that $f$ is a nonnegative absolutely continuous function such that holds $f\in \mathrm{Hol}_\beta(\kappa, I)$ %for some $0<\alpha$ 
 and $\|f\|_{L^\infty(I)}\leq \delta l(I)$  for some $0<\delta\leq 1$ and $1\leq \kappa$. Then there exists a nonnegative function $F\in \mathrm C^{\infty}(\mathbb R),$ such that
\begin{enumerate}[label=\roman*)]
\item \label{one}$F=0$ outside $1.5I,$
\item \label{two} $ f(x)\leq F(x)$ for all $x\in I,$
%\item $||F'||_{\infty}\lesssim 1;$
\item \label{three} $\| (\mathcal HF)^{\prime}\|_{L^\infty(\mathbb R)}\lesssim \delta,$ 
\item \label{four} $\int_{\mathbb R}F(x) dx \lesssim \int_I f +  \kappa \delta^{-\beta}l(I)^{1-\beta}\left(\int_I f\right)^{\beta}
%l(I)^{\frac{2}{2r-1}}\left(\int_I f\right)^{\frac{2r-2}{2r-1}}\kappa^{\frac{2r}{2r-1}}
.$
\end{enumerate}
\end{lemma}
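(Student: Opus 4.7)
The plan is to build $F$ as a smoothly mollified bump of height $M := \|f\|_{L^\infty(I)}$ whose width is calibrated to balance the bounds in (3) and (4). The essential regularity input is Morrey's embedding applied to the hypothesis $\|f'\|_{L^r(I)} \leq \kappa l(I)^{1/r}$ ($r > 1$):
\[
|f(x) - f(y)| \leq \kappa\, l(I)^{1/r}\, |x - y|^{1 - 1/r}, \qquad x, y \in I.
\]
Consequently, around any maximum $x_0$ of $f$ on $I$, the super-level set $\{f \geq M/2\}$ contains an interval of length at least $2w$ with $w := (M/(2\kappa))^{r/(r-1)}\, l(I)^{-1/(r-1)}$; this yields the key lower bound
\[
\int_I f \gtrsim M w \gtrsim M^{(2r-1)/(r-1)}\, \kappa^{-r/(r-1)}\, l(I)^{-1/(r-1)},
\]
which will drive the exponent $(2r - 2)/(2r - 1)$ appearing in condition (4).

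For the construction, fix a smooth bump $\phi \in C_c^\infty(\mathbb{R})$ with $\phi \equiv 1$ on $[-1/2, 1/2]$ and $\operatorname{supp}\phi \subset [-1, 1]$, set $W := c_0 \max(M/\delta, w)$ for an absolute constant $c_0$ to be fixed at the end, and define $F(x) := M\, \phi((x - x_0)/W)$. Condition (1) follows from $W \lesssim l(I)$, which uses both $M \leq \delta l(I)$ and $w \leq l(I)$. Condition (2) holds on the plateau $\{F \equiv M\}$ by construction, and off it by the Hölder estimate together with the choice $W \geq c_0 w$ (so that $f \leq M/2 \leq F$ outside the peak basin). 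Condition (3) is the key calculation: the scaling identity
\[
(\mathcal H F)'(y) \;=\; \tfrac{M}{W}\,(\mathcal H \phi')\!\left(\tfrac{y - x_0}{W}\right),
\]
together with the boundedness of $\mathcal H \phi'$, gives $\|(\mathcal H F)'\|_\infty \lesssim M/W \lesssim \delta$ since $W \geq c_0 M/\delta$.

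Condition (4) is then verified by splitting into two cases. If $w \geq M/\delta$ (so $W = c_0 w$), then $\int F \lesssim M w \lesssim \int_I f$, and the desired bound reduces to elementary algebra using the lower bound on $\int_I f$ obtained above. If $w < M/\delta$ (so $W = c_0 M/\delta$), then $\int F \lesssim M^2/\delta$, and one plugs the lower bound into the right-hand side of (4); the exponent identities $(2r - 2)/(r - 1) = 2$ and $r(2r - 2)/[(r - 1)(2r - 1)] = 2r/(2r - 1)$ make the resulting inequality homogeneous in $M$ and $\kappa$, and it follows directly.

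The main obstacle, absent from the Lipschitz treatment of \cite{nazhav}, arises when $f$ has several well-separated peaks on $I$: a single bump cannot realize condition (2), and one must instead produce a Whitney-type cover $\{J_k\}$ of the super-level set $\{f \geq M/10\} \cap I$, place a bump of height $M_k := \sup_{J_k} f$ and width determined by the local analogue of the above choice over each $J_k$, and sum. The delicate new point is controlling the cross interactions of $\mathcal H \phi'$ between different bumps: although each individual bump contributes only $\lesssim \delta$ by the scaling identity, the tails of $\mathcal H \phi'$ decay only polynomially (like $1/y^2$), so one must sum an absolutely convergent series using the geometric separation of peaks forced by Morrey's inequality. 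This is exactly the point where I expect the Hadamard--Landau-type estimate (Lemma~\ref{eprst}) to be invoked.
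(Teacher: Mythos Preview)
Your construction does not achieve condition~(2). Morrey's inequality gives only a \emph{lower} bound on $f$ near the maximum $x_0$---it says $f$ cannot drop too fast---and supplies no upper bound away from $x_0$; nothing prevents $f$ from remaining positive (even close to $M$) on the rest of $I$, so on $I\setminus\operatorname{supp}F$ you may well have $f>0=F$. The same flaw persists in your multi-peak sketch: bumps placed only over a cover of $\{f\geq M/10\}$ leave the set $\{0<f<M/10\}$ entirely uncovered. A concrete obstruction (after the reduction $\delta=1$, $l(I)=1$, $\kappa\geq 1$) is a function equal to a small constant $\epsilon>0$ on most of $I$ with one spike of height $M\ll 1$ at $x_0$: your short bump misses the $\epsilon$-plateau, so~(2) fails; the alternative full-width bump of height $M$ has $\int F\asymp M$, whereas $(\int_I f)^{\alpha}\asymp (Mw+\epsilon)^{\alpha}$ can be of order $M^2\kappa^{-2r/(2r-1)}$, so~(4) fails for small $M$.

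The paper's proof is genuinely multi-scale and cannot be short-circuited in this way. It covers $\{f>0\}$ by the maximal dyadic subintervals $a\subset I^*$ with $\|f\|_{L^\infty(a)}\geq l(a)/2$, attaches Nazarov's ``tails'' to obtain a regularized family $\tau$, and sets $F=\sum_{a\in\tau}\phi_a$ where each hat $\phi_a$ has height $l(a)$; the bump heights therefore vary with the local size of $f$, and~(2) holds because $f(x)\leq l(a)\leq F(x)$ on every $a\in\tau$. Two further corrections: Morrey's inequality forces no separation between peaks---peaks of arbitrary heights can be adjacent---so your proposed control of the cross terms in~(3) by a geometric series has no basis; in the paper,~(3) comes instead from the combinatorial geometry of $\tau$ (the distance lemma and the bounded overlap of $\{2a:a\in\tau\}$). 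And Lemma~\ref{eprst} is used for condition~(4), not~(3): it converts $\sum_{c\in A^M}l(c)^2\lesssim\sum\|f\|_{L^\infty(c)}^2$ into $\sum(\int_c f)^{\alpha}\kappa_c^{\,2-\alpha}$, after which H\"older with exponents $1/\alpha$ and $1/(1-\alpha)$ closes the estimate.
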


In the case when $\beta=1$ in Lemma~\ref{local}, the corresponding result coincides with Lemma 2.6 from paper~\cite{nazhav}. %Note that there is no difference between the powers $r/(r-1)$ and $1$ in this case.

In the formulation of Lemma~\ref{local} and until the end of the third section, the signs $\lesssim$ and $\gtrsim$ indicate that the left-hand (right-hand) part of an inequality is less than the right-hand (left-hand) part multiplied by a constant independent of $\delta, f, \kappa$ and $I$. 

The rest of this section is entirely devoted to the proof of Lemma~\ref{local}. 

\begin{proof} (of the new local Nazarov lemma)

The following definition is very important.
\begin{defin}
We say that a dyadic interval $a\subset I$ is essential, if $\|f\|_{L^{\infty}(a)}\geq \delta l(a)/2.$ %where $\beta:=(r-1)/r.$ 
Denote $A$ the set of essential intervals.
\end{defin}

%Note that if $r>1,$ then $1\geq\beta>0.$ 
It is straightforward to see that we have
$$\{x\in I:f(x)>0\}\subseteq\bigcup_{a\in A}a.$$
However, we will not use this fact later on in our estimates.

Consider $A^M$, the set of maximal by inclusion elements of $A$.
To each interval $a\in A^M$ we associate its tail $t(a)$.  Informally, the tail  $t(a)$ is a family of dyadic intervals that is composed of a countable number of finite series $t_p(a),p=0,1,2,\ldots$ of dyadic intervals. For $p=0$ we define $t_0(a):=a$ and for a fixed $p\geq 1$, the intervals of the family $t_p(a)$ all have length equal to $l(a)/2^p$ and their unions form  the sets 
\begin{multline*}
	a\cup \bigcup_{1\leq q \leq p} t_q(a)\\
	=\{x\in \mathbb R: l(a)/2+l(a)\sum_{q=1}^{p-1}3^q/2^q \leq|x-c_a|< l(a)/2+l(a)\sum_{q=1}^{p}3^q/2^q \}.
\end{multline*}

For a detailed discussion of tails, see paper~\cite{nazhav}, Section 2.6.5. In fact, after that we will have added these tails, we will get a regularized system of intervals, see~\cite{nazhav}, Sections 2.6 and 2.7. Next, we define $B:=\bigcup_{a\in A^M}t(a)$, and then pose $\tau:=\{c\in B^M:c\subseteq I\}.$ Here, $B^M$ stands for the set of maximal by inclusion elements of $B$. Note that the system $\tau$ covers $I$, consists of dyadic intervals and any $c\in \tau$ satisfies $\delta l(c)\geq \|f\|_{L^\infty(c)}$, see~\cite{nazhav}. 

Define for an interval $a\in \tau$ its neighborhood $N(a)$ by
$$N(a):=\{b\in \tau: d(a,b)\leq 2l(a), \frac{1}{2}\leq \frac{l(a)}{l(b)}\leq 2\}.$$
Note that $\#N(a)\leq 9.$ We shall need the following property of the system $\tau.$
\begin{lemma}
	\label{trois}
 Suppose that $a\in \tau$ and $b\in \tau \backslash N(a).$ If $l(b)\leq 2l(a)$ then $d(2a,2b)\geq l(a)/2$, and if $l(b)=2^k l(a)$ for some natural $k\geq 2$, then $d(2a,2b)\geq 2\cdot 3^{k-2}l(a).$ 
\end{lemma}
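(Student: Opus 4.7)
The plan is to exploit two structural features of $\tau$: its elements are pairwise disjoint (being the maximal by inclusion connected subintervals of $B=\bigcup_{a\in A^{M}}t(a)$), and the tail operation $a\mapsto t(a)$ attaches to each maximal essential interval an extension of order $l(a)$ on each side, forcing adjacent elements of $\tau$ to have comparable lengths. I split the argument along the two cases of the statement.

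In the regime $l(b)\le 2l(a)$, the hypothesis $b\notin N(a)$ leaves two alternatives: either $d(a,b)>2l(a)$, in which case the direct computation
$$d(2a,2b)=d(a,b)-\tfrac{l(a)+l(b)}{2}>2l(a)-\tfrac{l(a)}{2}-l(a)=\tfrac{l(a)}{2}$$
yields the bound at once; or $l(a)/l(b)>2$, which, combined with $l(b)\le 2l(a)$, forces $l(b)<l(a)/2$. For this latter sub-case I would argue that such a small $b$ cannot lie within distance $2l(a)$ of $a$: the tail of the maximal essential interval producing $a$ would otherwise meet (or connect through intermediate tails with) the tail producing $b$ inside $B$, so the two would be subsumed into a single element of $B^{M}$, contradicting the fact that $a$ and $b$ are distinct maximal elements.

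For the case $l(b)=2^{k}l(a)$ with $k\ge 2$, I plan to proceed by induction on $k$, the base case $k=2$ being treated directly from the previous paragraph's geometry (since the length ratio already forces $b\notin N(a)$ automatically, one only has to rule out closeness by the same tail-merging argument). The engine of the induction is a \emph{one-scale separation lemma}: whenever $c,d\in\tau$ satisfy $l(d)\ge 2l(c)$ with $c$ spatially between $a$ and $d$, that pair contributes at least $2l(c)$ to the running separation. Between $a$ and $b$ I construct a chain $a=c_{0},c_{1},\ldots,c_{m}=b$ of intervals of $\tau$ of geometrically increasing length and sum the contributions along it; the factor $3^{k-2}$ arises because at each stage the accumulated separation is augmented not only by a new chunk of length comparable to $2^{j}l(a)$ but also by the half-lengths of the two neighbouring intervals, yielding a recursion $D_{j}\ge 3D_{j-1}$ that iterates to $D_{k}\ge 2\cdot 3^{k-2}l(a)$.

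The main obstacle I foresee is the one-scale separation lemma itself and the construction of the chain: both depend on the exact sizes of the tails prescribed in Section 2.6.5 of \cite{nazhav}, in particular on the fact that $l(t(a))$ is comparable to $l(a)$ with a constant large enough to enforce the required multiplicative gap at every scale. Once this technical ingredient is in place, the induction itself is routine bookkeeping, and the resulting estimates assemble into the two inequalities claimed in the lemma.
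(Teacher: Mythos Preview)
The paper itself does not prove this lemma: it simply defers to \cite{nazhav}, Section~2.6.6, where the explicit construction of the tails $t(c)$ is given and the estimate is read off from that construction. So the relevant comparison is between your sketch and the argument there.

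Your computation in the sub-case $d(a,b)>2l(a)$ is correct. Beyond that, however, there are two genuine problems.

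First, your description of $\tau$ as ``the maximal by inclusion connected subintervals of $B$'' is not what $\tau$ is. Each tail $t(c)$ is a \emph{collection} of dyadic intervals (the $p$-th layer $t_{p}(c)$ consists of roughly $3^{p}$ dyadic intervals of length $l(c)/2^{p}$), so $B=\bigcup_{c\in A^{M}}t(c)$ is a family of dyadic intervals and $\tau\subset B^{M}$ consists of the inclusion-maximal ones lying in $I^{*}$. Your ``tail-merging'' picture---two tails touching and collapsing into one connected component---is therefore the wrong mechanism. What actually prevents a short $b$ from sitting next to a long $a$ is that $b$ would then be \emph{contained} in a strictly larger dyadic interval already present in $B$, contradicting the maximality of $b$.

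Second, and more seriously, the chain construction you propose for $l(b)=2^{k}l(a)$ does not go through. There is no reason for intermediate intervals $c_{1},\dots,c_{m-1}\in\tau$ of geometrically increasing lengths to exist between $a$ and $b$; two elements of $\tau$ can be spatially adjacent with length ratio $2^{k}$ and nothing from $\tau$ in between, so your induction has no scaffolding to climb and the recursion $D_{j}\ge 3D_{j-1}$ is unfounded. The argument in \cite{nazhav} avoids this entirely: the recursive definition of the tail guarantees that whenever an interval $J$ lies in $B$, all dyadic intervals of length $l(J)/2$ adjacent to $J$ also lie in $B$, and iterating this down $k$ scales from $b$ shows that every dyadic interval of length $l(a)$ within distance of order $3^{k}\,l(a)$ of $b$ is covered by a strictly larger interval of $B$. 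Since $a\in\tau$ forces the dyadic parent of $a$ \emph{not} to belong to $B$, the interval $a$ must lie outside that neighbourhood, and the factor $3^{k-2}$ falls out of the tail geometry directly rather than from any chain inside $\tau$.
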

\begin{proof} The proof of this lemma is not detailed here, since it can be found in paper~\cite{nazhav}, Section 2.6.6.
\end{proof}

Denote $2\tau:=\{2c : c \in \tau\}.$ As a direct consequence of the lemma, we deduce that the multiplicity
$\#\{b\in 2\tau:x\in b\}$ is uniformly bounded in  $x\in \mathbb R$. 
Indeed, if $b\in\tau\backslash N(a),$ then $d(2a,2b)>0$ and
$$\sup\limits_{x\in \mathbb R}\#\{b\in 2\tau:x\in b\}\leq \sup\limits_{a\in\tau}\#N(a)\lesssim 1.$$

\bigskip

Fix a bump function $\phi$, i.e. $\phi \in \mathrm C^{\infty}(\mathbb R)$ satisfying $0\leq \phi(x)\leq 1$ for all $x\in \mathbb R$, $\phi\equiv 0$ outside $1.5I^*$ and $\phi\equiv 1$ on $I^*$. Second, for an interval $a\in \tau$ pose $$\phi_a(\cdot):=\delta l(a)\phi\Bigl(\frac{(\cdot)-c_a}{l(a)}\Bigr).$$
Simple calculation shows that 
$$\mathcal H\phi_b(\cdot)=\delta l(b)\mathcal H\phi\left(\frac{(\cdot)-c_b}{l(b)}\right).$$ 
Hence we infer the inequality $\|(\mathcal H\phi_b)^{\prime}\|_{L^{\infty}(\mathbb R)}\lesssim \delta.$
We finally define $F$ by 
$$F:=\sum_{a\in \tau} \phi_a.$$

Now we have to check the required properties of the majorant $F.$ The first one follows readily from the definition of $F.$ To prove the second one, note that for all $a\in \tau$ we have $\|f\|_{L^{\infty}(a)}\leq \delta l(a)$.  Indeed, suppose the contrary, i.e. that $\|f\|_{L^{\infty}(a_0)}> \delta l(a_0)$ for some $a_0\in \tau$. This means that
$$\|f\|_{L^{\infty}(a_0^\sharp)}\geq \|f\|_{L^{\infty}(a_0)}> \delta l(a_0)=\delta \biggl(\frac{l(a_0^\sharp)}{2}\biggr),$$
which in turn signifies that $a_0^\sharp$ is an essential interval and hence $a_0^\sharp\in\tau$. This contradicts to the definition of $\tau.$ From here we deduce that if $x\in a\in \tau$, then $$F(x)\geq \delta  l(a)\geq \|f\|_{L^{\infty}(a)}\geq f(x).$$

Next we estimate the integral of the function $F$. To this end, we prove a variant of the Hadamard--Landau inequality which is appropriate for our goals.
\begin{lemma}
\label{eprst}
Let $a$ be an interval such that 
%and let $\kappa_a$ denote the following norm $\kappa_a:=(\int_a |f^\prime|^r)^{1/r}$. %and let $f$ satisfy $\|f^{\prime}\|_{L^r(a)}\leq \kappa l(a)^{1/r}$ with $\kappa$ and $r$ as above (i.e., $\kappa\geq 1$ and $r>1$). %Denote $\alpha:=(2r-2)/(2r-1)$. 
 $a\in A^M$. Then we have 
$$\|f\|^2_{L^{\infty}(a)}\lesssim \biggl(\int_a f\biggr)\delta+\kappa \biggl(\int_a f\biggr)^\beta (\delta l(a))^{1-\beta}
%\biggl(\int_a |f^\prime|^r \biggr)^{(1-\alpha/2)/r}
,$$
where $C(r)$ is a positive constant, depending on $r$ only.
\end{lemma}

\begin{proof} 
Let $x_0\in a$ be a point such that $\|f\|_{L^{\infty}(a)}=f(x_0).$ Suppose with no loss of generality that $a_+-x_0\geq l(a)/2,$ where $a_+$ is the right end of the interval $a.$ Consider a point $x\in(x_0,a_+)$. %Denote $\kappa_a:=(\int_a |f^\prime|^r)^{1/r}$. 
%\geq l(a)^{1/r}$. 
% We make use of the Newton--Leibniz formula and of the 
 Since $f$ is H\"older continuous, we hence infer the following estimate
%Suppose, as we can, that $\kappa_a:=(\int_a |f^\prime|^r)^{1/r}\geq l(a)^{1/r}$. Indeed, this assumption %$\kappa_a\geq l(a)^{1/r}$ for all $a\in \tau$ does not violate any generality, because if $\kappa_a<l(a)^{1/r}$ then it suffices to multiply the function $f$ by $(l(a)^{1/r}/\kappa_a)$ and to apply the conclusion of the lemma. %and we shall further suppose this for all such intervals.
%Let $x_0\in a$ be a point such that $\|f\|_{L^{\infty}(a)}=f(x_0).$ Suppose with no loss of generality that $a_+-x_0\geq l(a)/2,$ where $a_+$ is the right end of the interval $a.$ Consider a point $x\in(x_0,a_+)$. We make use of the Newton--Leibniz formula and of the H\"older inequality to infer the following estimates
\begin{equation*}
%\begin{split}
f(x)\geq f(x_0)-\kappa (x-x_0)^{\beta}.
%|&=\Bigl|\int_{x_0}^x f'(t) dt\Bigr| \\
%&\leq |x-x_0|^{1-1/r}\left(\int_{x_0}^x |f'(t)|^r %dt\right)^{1/r} \leq \kappa_a |x-x_0|^{1-1/r}.
%\end{split}
\end{equation*}
%thanks to the Newton--Leibniz formula and to the H\"older inequality. 

Denote $\upsilon:=(f(x_0)/\kappa)^{1/\beta}$. We shall treat two cases separately, according to the value of $\upsilon$. First, we suppose that $\upsilon<l(a)/2$. Observe that in this case the point $x_0+\upsilon$ belongs to the interval $a$. We integrate the estimate just above  %$f(x)\geq f(x_0)-\kappa_a |x-x_0|^{1-1/r}$ 
using this observation and deduce that
%Observe that the point $x_0+\upsilon/2$ belongs to the interval $a$. This follows from the inequality $\|f\|_{L^\infty(a)}\leq l(a)$, which is true since $a\in\tau$. We integrate the inequality $f(x)\geq f(x_0)-\kappa_a |x-x_0|^{1-1/r}$ using this observation and deduce that
\begin{equation}
\label{nadoelo}
\begin{split}
\int_a f &\geq \int_{x_0}^{x_0+\upsilon/2} f(x) dx \geq  \int_{x_0}^{x_0+\upsilon/2} f(x_0)-\kappa (x-x_0)^{\beta} dx\\
&= \frac{f(x_0)}{2}\left(\frac{f(x_0)}{\kappa}\right)^{1/\beta} - \frac{\kappa}{2^{1+\beta}(\beta+1)}\left(\frac{f(x_0)}{\kappa}\right)^{(\beta+1)/\beta}\\
&\gtrsim\frac{\|f\|_{L^\infty(a)}^{(\beta+1)/\beta}}{\kappa^{1/\beta}}=\frac{\|f\|_{L^\infty(a)}^{2/\beta}\|f\|_{L^\infty(a)}^{(\beta-1)/\beta}}{\kappa^{1/\beta}}\gtrsim\frac{\|f\|_{L^\infty(a)}^{2/\beta}(\delta l(a))^{(\beta-1)/\beta}}{\kappa^{1/\beta}},
\end{split}
\end{equation}
%But since $a\in A^M$, we have also $\kappa_a\leq \kappa l(a)^{1/r}\leq \kappa (2\|f\|_{L^\infty(a)})^{1/r}.$ This and lines~\eqref{nadoelo} give the claimed inequality in the first case.
where the last bound above follows from the fact that $a\in A^M$. Hence we have that
$$
\|f\|_{L^\infty(a)}^2\lesssim \kappa \biggl(\int_a f\biggr)^\beta (\delta l(a))^{1-\beta}.
$$

\bigskip

Consider now the second case, where $\upsilon\geq l(a)/2$. In this case we shall use the fact that the point $x_0+l(a)/2$ belongs to the interval $a$. Integrating the same inequality as in the first case yields
\begin{equation}
\label{nadoelo2}
\begin{split}
\int_a f &\geq \int_{x_0}^{x_0+l(a)/2} f(x) dx \\
&\geq  \frac{l(a)f(x_0)}{2}-\frac{\kappa l(a)^{\beta+1}}{2^{\beta+1}(\beta+1)}=\frac{l(a)}{2}\left(f(x_0)-\frac{\kappa}{\beta+1}\cdot\left(\frac{l(a)}{2}\right)^{\beta}\right).
\end{split}
\end{equation}
Note that since $\upsilon\geq l(a)/2$, we have also that $f(x_0)/\kappa_a\geq (l(a)/2)^{\beta}$. Let us use this in the following way
\begin{equation*}
\int_a f \geq \frac{l(a)}{2}\left(f(x_0)-\frac{f(x_0)}{\beta+1}\right)\gtrsim \delta^{-1} \|f\|^2_{L^\infty(a)},
\end{equation*}
 %This and inequality~\eqref{nadoelo2} prove the claim in the second case.
thanks to the fact that $a\in A^M$. Hence Lemma~\ref{eprst} is proved.
\end{proof}
%\begin{rem}
%The assumption $\kappa_a>l(a)^{1/r}$ for all $a\in \tau$ does not violate any generality and we shall further suppose this for all such intervals.
%\end{rem}

So, let us start the estimates of the integral of the function $F$:

\begin{equation}
\label{chastnoe}
\begin{split}
&\int_{\mathbb R} F \leq \sum_{b\in A^{M}} \int_{\mathbb R} \phi_b +  \sum_{c\in A^{M}} \sum_{b\in t(c)\backslash c} \int_{\mathbb R} \phi_b \leq \delta\sum_{b\in A^{M}}l(b)^{2} + \delta\sum_{c\in A^{M}}\sum_{b\in t(c)\backslash c} l(b)^{2} \\ 
 &\lesssim\delta\sum_{b\in A^{M}}l(b)^{2} + \delta\sum_{c\in A^{M}}\sum_{p=1}^{\infty}\sum_{b\in t_p(c)} l(b)^{2} \lesssim \delta\sum_{b\in A^{M}} l(b)^{2} + \delta\sum_{c\in A^{M}}\sum_{p=1}^{\infty} 3^p\biggl(\frac{l(c)}{2^p}\biggr)^{2} \\
&\lesssim\delta\sum_{c\in A^{M}} l(c)^{2} \lesssim \delta^{-1}\sum_{c\in A^{M}}\|f\|_{L^\infty(c)}^{2}.
\end{split}
\end{equation}

We further use the result of Lemma~\ref{eprst} to continue the estimates of the integral of the function $F$:

\begin{multline}
\label{chastnoe}
\int_{\mathbb R} F\lesssim \sum_{c\in A^{M}}  \int_c f+\delta^{-1}\sum_{c\in A^{M}}\kappa \biggl(\int_c f\biggr)^\beta (\delta l(c))^{1-\beta}\\
 \leq \int_I f +  \delta^{-\beta}\kappa\biggl(\sum_{c\in A^{M}}\int_c f\biggr)^{\beta}\cdot \biggl(\sum_{c\in A^{M}}l(c)\biggr)^{1-\beta}\\
\lesssim \int_I f +  \delta^{-\beta}\kappa l(I)^{1-\beta}\biggl(\int_I f\biggr)^{\beta}.
%\sum_{c\in A^{M}}\biggl(\int_c f\biggr)\kappa^{r/(r-1)}\leq \kappa^{r/(r-1)}\int_I f.
%\\ \leq\biggl(\sum_{c\in A^{M}}\int_c f\biggr)^{\alpha}\biggl(\sum_{c\in A^{M}}\kappa_c^{(2-\alpha)/(1-\alpha)}\biggr)^{1-\alpha}\leq \kappa^{2r/(2r-1)} \biggl(\int_{\mathbb R}f\biggr)^{\alpha}.
\end{multline}
The last and the last but one inequalities just above are in need of explanation. The last estimate uses the fact that intervals of $A^M$ are nonoverlapping, whereas the penultimate bound follows from the H\"older inequality.
%$\kappa_c<\kappa$ and that $2-\alpha=2r/(2r-1)$, whereas the penultimate bound follows from the H\"older inequality with exponents $1/\alpha>1$ and its conjugate $1/(1-\alpha).$

It remains to derive the inequality on the derivative of the Hilbert transformation of the function $F$. First, we shall obtain this estimate for $x\in \bigcup_{b\in\tau}{2b}.$ Let $a(=a(x))$ denote the interval from $\tau$ such that $x\in 2a$. We isolate the neighborhood $N(a)$ from its complement in $\tau$ and infer the following inequality
\begin{multline*}
|(\mathcal HF)^\prime(x)|\\
\leq \sum_{b\in N(a)}|(\mathcal H\phi_b)^\prime(x)| + \sum_{\substack{b\in \tau\backslash N(a),\\ l(b)\leq 2l(a)}}|(\mathcal H\phi_b)^\prime(x)|+ \sum_{k=2}^{\infty}\sum_{\substack{b\in \tau\backslash N(a),\\ l(b)=2^k l(a)}}|(\mathcal H\phi_b)^\prime(x)|=: S_1+S_2+S_3.
\end{multline*}

We shall estimate the terms $S_1, S_2$ and $S_3$ separately. We start with the sum $S_1,$ whose estimate turns out to be easy
$$S_1\leq \#N(a) \sup_{b\in \tau}\|(\mathcal H\phi_b)^\prime\|_{L^{\infty}(\mathbb R)}\lesssim \delta.$$

We further proceed to the second term. We use a simple estimate on the kernel of the Hilbert transformation, the fact that the system of intervals $\{2b\}_{b\in\tau}$ (by Lemma~\ref{trois}) has finite multiplicity and Lemma~\ref{trois} to get
\begin{multline*}
S_2\lesssim \sum_{\substack{b\in \tau\backslash N(a),\\ l(b)\leq 2l(a)}}\int_{\mathbb R}\phi_b(t)\frac{\partial}{\partial x}\Bigl(\frac{1}{t-x}\Bigr)dt \lesssim\sum_{\substack{b\in \tau\backslash N(a),\\ l(b)\leq 2l(a)}}\int_{\mathbb R}\frac{\phi_b(t)}{(t-x)^2}dt \\
 \lesssim\sum_{\substack{b\in \tau\backslash N(a),\\ l(b)\leq 2l(a)}}\int_{1.5b} \frac{\delta l(a) dt}{(t-x)^2}\lesssim \delta l(a)\int_{\{|u|\geq l(a)/2\}}\frac{du}{|u|^2}\lesssim \delta.
\end{multline*}

The third term  can be estimated as well using Lemma~\ref{trois}:
\begin{multline*}
S_3\lesssim \sum_{k=2}^{\infty}\sum_{\substack{b\in \tau\backslash N(a),\\ l(b)=2^k l(a)}} \int_{2b}\frac{\phi_b(t)dt}{(t-x)^2}\leq \sum_{k=2}^{\infty}2^k \delta l(a)\sum_{\substack{b\in \tau\backslash N(a),\\ l(b)=2^k l(a)}} \int_{2b}\frac{dt}{(t-x)^2}  \\ \lesssim\sum_{k=2}^{\infty}2^k\delta l(a) \int_{\{|u|\geq 2\cdot 3^{k-2}l(a)\}}\frac{du}{u^2}\lesssim \delta,
\end{multline*}
and the lemma for $x\in \bigcup_{b\in\tau}{2b}$ follows. 

Next, if a point $z\in \mathbb R$ is situated at a positive distance from the set $\bigcup_{b\in\tau}{2b},$ then denote by $x$ the point of this set closest to $z$, and let $a(=a(x))$ be an interval as above. We infer the following estimates
\begin{multline*}
|(\mathcal HF)^\prime(z)|\leq \sum_{b\in \tau\backslash N(a)} |(\mathcal H\phi_b)^\prime(z)| + \sum_{b\in N(a)} |(\mathcal H\phi_b)^\prime(z)|\\
\lesssim\sum_{b\in \tau\backslash N(a)} \int_{\mathbb R}\frac{\phi_b(t)dt}{(t-x)^2}+\#N(a) \sup_{b\in \tau}\|(\mathcal H\phi_b)^\prime\|_{L^{\infty}(\mathbb R)}.
\end{multline*}
Thanks to the estimates of the terms $S_1$, $S_2$ and $S_3$, we conclude that the needed variant of the local Nazarov lemma is proved.
\end{proof}

\section{Proof of a new global Nazarov lemma}

In this section, we shall derive the global Nazarov lemma from the local one. 

\begin{proof} Until the end of the third section, the signs $\lesssim$ and $\gtrsim$ indicate that the left-hand (right-hand) part of an inequality is less than the right-hand (left-hand) part multiplied by a ``harmless'' positive constant.

%Denote $\mu:=\|\Omega\|_{V_r}$. 
Note that we may assume in the global Nazarov lemma that $\Omega(x)=0$ for $|x|\leq R$ with $R$ being an arbitrary large positive number. Indeed, if it is not the case, then consider the function $\mathit{\Omega}(\cdot)=\max(0,\Omega-\mathcal M)(\cdot),$ where $\mathcal M:=\max_{x\in B(0,R)} \Omega(x)$. If $\mathit{\Omega_1}$ is a majorant of the function $\mathit{\Omega},$ satisfying properties~\ref{b} and~\ref{c} then the function $\mathit{\Omega_1}+\mathcal M$ will be the desired majorant of the function $\Omega.$ 
%We also readily deduce that $\Omega(x)\leq \mu|x|.$

%Let us now prove the global Nazarov lemma. Denote $\mu:=\|\Omega\|_{V_r}.$  First, given $\varepsilon>0,$ we find $R$ large enough to guarantee $\int_{R}^\infty\Omega(t)dP(t)\leq \varepsilon^{2/\alpha}.$ Note that we may assume in the lemma that $\Omega(x)=0$ for $|x|\leq \max(R,\Omega(0)/\mu,1)=:\sigma.$ Indeed, if it is not the case consider the function $\widetilde{\Omega}(\cdot)=\max(0,\Omega-M)(\cdot),$ where $M:=\max_{x\in B(0,\sigma)} \Omega(x)$. If $\widetilde{\Omega_1}$ is a majorant of $\widetilde{\Omega},$ then $\Omega_1:=\widetilde{\Omega_1}+M$ will be a majorant of $\Omega.$

Fix $0<\varepsilon\leq 1$ and choose $1<R_1$ so big that 
$$\int_{\mathbb R\backslash (-R_1,R_1)}\Omega dP\leq \varepsilon.$$ 
Since the series~\eqref{ryad} converges, there hence exists a natural $N_1$ so big that for all $j>N_1$ holds $\kappa_j^{1/(1-\beta)}2^{-j}\leq \varepsilon^{1/(1-\beta)}.$ As a consequence, we infer for all such $j$ the following bound 
\begin{equation}
\label{kappppa}
\kappa_j 2^{j(\beta+1)}\leq \varepsilon 2^{2j}.
\end{equation}
Thanks to the previous paragraph, we may assume that $\Omega$ equals to zero at the interval $(-\max(R_1,2^{N_1}),\max(R_1,2^{N_1}))$. 

Recall the above defined system of intervals: $J_0=[-2,2),$ and for $j\in \mathbb N$, 
$$J_{j}=[2^{j},2^{j+1}),J_{-j}=[-2^{j+1},-2^{j}).$$ 
Next, we shall prove for $x\in \mathbb R$ the following inequality 
\begin{equation}
\label{ocbesk}
 \Omega(x)\lesssim \varepsilon|x|.
%^{\alpha}.
\end{equation}
With no loss of generality, we suppose that $x>0$ and we let $n\in \mathbb N$ be such that $2^n\leq |x|< 2^{n+1}$. First, note that according to the previous paragraph, the bound~\eqref{ocbesk} is obvious once $|x|\lesssim 1$. Second, for $1 \lesssim |x|$ we will argue as in Lemma~\ref{eprst}. We thus find a point $x_0\in [2^n,2^{n+1})$ such that $\Omega(x_0)=\|\Omega\|_{L^{\infty}(J_n)}$. Then, for any $y\in J_n$ we have
$$
\Omega(y)\geq \Omega(x_0)-\kappa_n|y-x_0|^{\beta}.
$$

%\begin{comment}
%\begin{equation}
%	\label{russkayamisl}
% \Omega(x)\leq |\Omega(x)-\Omega(2^n)|+\sum_{j=1}^{n}|\Omega(2^{j})-\Omega(2^{j-1})|
% \lesssim \sum_{j=1}^{n+1} 2^{j\beta} \kappa_j\lesssim 2^n \leq |x|,
%^{\alpha}.
%\end{equation}
%which means that $\Omega(x)\leq \mu |x|$ for some $\mu>0$. Second, according to the previous paragraph the bound~\eqref{ocbesk} is %obvious once $|x|\lesssim 1$, and for $1 \lesssim |x|$ we will argue as in Lemma~\ref{eprst}. We thus find a point $x_0\in [2^n,2^{n+1}%)$ such that $\Omega(x_0)=\|\Omega\|_{L^{\infty}(J_n)}$. Then, for any $y\in J_n$ we have
%$$
%\Omega(y)\geq \Omega(x_0)-\kappa_n|y-x_0|^{\beta}.
%$$
%for some constant $C(\mu)$ depending on $\mu$ only. %
%\end{comment}
%
%Further, denote $\epsilon:=(\Omega(x_0)/\kappa_n)^{1/\beta}.$ Due to the bound~\eqref{russkayamisl}, one of the points $x_0\pm\epsilon$ belongs to the interval $I_n$. 

Once again, without loss of generality we suppose that the point $x_0+2^n/2$  belongs to the interval $J_n$. We finally infer the following chain of inequalities

\begin{multline}
%\label{rostnabesk}
\varepsilon \geq \int \Omega dP\geq |x|^{-2}\int_{J_n} \Omega(y) dy \\
\geq |x|^{-2} \int_{x_0}^{x_0+2^n/2} \left(\Omega(x_0)-\kappa_n(y-x_0)^{\beta}\right)dy \\
\geq |x|^{-2}\left(2^{n-1}\Omega(x_0)-C(\beta)\kappa_n2^{n(\beta+1)}\right).
\end{multline}
%for some constant $C(\mu,r)$ depending on $\mu$ and $r$ only. 
Hence, the bound~\eqref{ocbesk} is proved, by virtue of~\eqref{kappppa}.

\bigskip

Apply the local Lemma to each interval $J_j$ and the corresponding restriction $f_j=\Omega\restr{J_j}$. Indeed, Lemma~\ref{local} can be applied since these functions satisfy 
$$\|f_j\|_{\infty}\leq \varepsilon 2^{j}\leq \varepsilon l(J_j)$$ 
by~\eqref{ocbesk}. % and 
%$$\|f_j^{\prime}\|_{L^r(J_j)}\leq \mu l(J_j)^{1/r}\leq\max(\mu,1)l(J_j)^{1/r},$$ 
%thanks to the fact that $\Omega$ is in $V_r$.
%and $\|f_j^{\prime}\|_r\lesssim \kappa$) 
 Thus we obtain functions $F_{j}$ for $j\in \mathbb Z.$ The needed majorant $\Omega_1$ is defined by
$$\Omega_1=\sum_{j\in\mathbb Z} F_{j}.$$

Now, we shall check the required properties of $\Omega_1.$ The first property follows obviously from the local Lemma. We proceed to the second one:
\begin{multline}
\label{mammamia}
	%\begin{split}
\int_{\mathbb R} \Omega_1(t) dP(t) = \sum_{j \in \mathbb Z} \int_{\mathbb R} F_{j}(t) dP(t) \lesssim \sum_{j\in \mathbb Z}\int_{1.5 J_{j}} F_{j}(t) \frac{dt}{2^{2|j|}} \\ 
\lesssim \varepsilon^{-\beta}\sum_{j\in \mathbb Z}2^{|j|(\beta-1)} \kappa_j \biggl(\int_{1.5 J_{j}} \Omega(t)\frac{dt}{2^{2|j|}}\biggr)^{\beta}+\sum_{j\in \mathbb Z}\int_{1.5 J_{j}}\Omega(t)\frac{dt}{2^{2|j|}}\\
 \lesssim \varepsilon^{-\beta}\biggl(\sum_{j\in \mathbb Z} 2^{-|j|}\kappa_j^{1/(1-\beta)}\biggr)^{1-\beta} \cdot\biggl(\sum_{j\in \mathbb Z}\int_{1.5 J_{j}} \Omega(t)\frac{dt}{2^{2|j|}}\biggr)^{\beta}+\varepsilon \lesssim\varepsilon,
%\end{split}
\end{multline}
where in the third inequality above we have used the local Lemma and in the penultimate bound we have used the H\"older inequality.
%$\alpha\in (0,1).$ 

So, it remains to check that the third conclusion holds. First, fix a point $x\in \mathbb R.$ Second, denote by $S(x)$ the interval from the system $\mathcal F=\{J_{j}\}_{j\in \mathbb Z}$ such that $x\in S(x).$ Next, denote by $U(x)$ the subset of $\mathcal F$ consisting of $S(x)$ and its two neighbor intervals and by $W(x)$ its complement: $W(x)= \mathcal F \backslash U(x).$ Finally, write the function $\Omega_1$ as a sum of two functions as follows:
$$\Omega_1=\sum_{j\in W(x)} F_{j}+ \sum_{j\in U(x)} F_{j}=:\omega_1+\omega_2.$$
Since there is only a finite number of intervals in the family $U(x)$, we see that
$$|(\mathcal H\omega_2)^{\prime}(x)|\lesssim \#U(x) \sup_{j\in U(x)}\|(\mathcal HF_{j})^{\prime}\|_{\infty}\lesssim \varepsilon,$$
where we have just used condition~\eqref{three} of Lemma~\ref{local} in the last estimate. On the other hand, since $\mathrm{supp} (\omega_1)\subseteq\bigcup_{j\in W(x)}1.5J_{j}$ we deduce that $$\mathrm{supp} (\omega_1)\subseteq\{t\in \mathbb R: |t-x|\geq \frac{l(S(x))}{4}\}\subseteq\{t\in \mathbb R: |t-x|\geq \frac{|x|}{16}\}.$$ Therefore, we arrive at the following chain of inequalities
\begin{multline*}
|(\mathcal H\omega_1)^{\prime}(x)|=\Bigl|\Bigl(\int_{\mathbb R}\omega_1(t)\frac{1}{t-x}dt\Bigr)^{\prime}\Bigr|=\Bigl|\int_{\mathbb R}\omega_1(t)\frac{\partial}{\partial x}\Bigl(\frac{1}{t-x}\Bigr)dt\Bigl|
\\ =\int_{\mathbb R}\omega_1(t)\frac{1}{(t-x)^2}dt \lesssim \int_{\mathbb R}\Omega_1(t) dP(t)\lesssim \varepsilon.
%\lesssim  \frac{1}{\varepsilon^{r/(r-1)}}\int_{\mathbb R}\Omega(t) dP(t) \lesssim \varepsilon<\infty,
\end{multline*} 
thanks to the bound~\eqref{mammamia}. 

Hence, the needed variant of the Nazarov lemma is  proved.
\end{proof}

Thus, Theorem~\ref{thm0} is also proved, via Theorem~\ref{diakon}.

\section{Sharpness of Theorem~\ref{thm1}}  

Note that the proof of Theorem~\ref{thm1.5}, is a direct consequence of the following proposition.

\begin{prop}
Let $\gamma>1/2$ and denote $I_n:=[2^n-2^n/n^\gamma,2^n+2^n/n^\gamma]$, for $n\geq 3$. Consider for $x\in \mathbb R$ the following function
\begin{equation}
	\label{moyastarayatsipella}
	\omega(x):=
	\begin{cases}
		\exp(-n^{\gamma-1/2} T_{I_n}(x)), \text{ if } x\in I_n \text{ with } n\geq 3,\\
		1, \text{ otherwise.}
	\end{cases}
\end{equation}
We claim that $\log(1/\omega)\in L^1(dP)$ and that $\log(1/\omega)$ satisfies the regularity assumption of Theorem~\ref{thm1.5}, though $\omega$ is not a $BM$ majorant.
\end{prop}

The graph of the function $\Omega=\log(1/\omega)$ and the main idea of the proof below (i.e. the iteration) is illustrated at Figure~\ref{kartinka2}.

\begin{center}
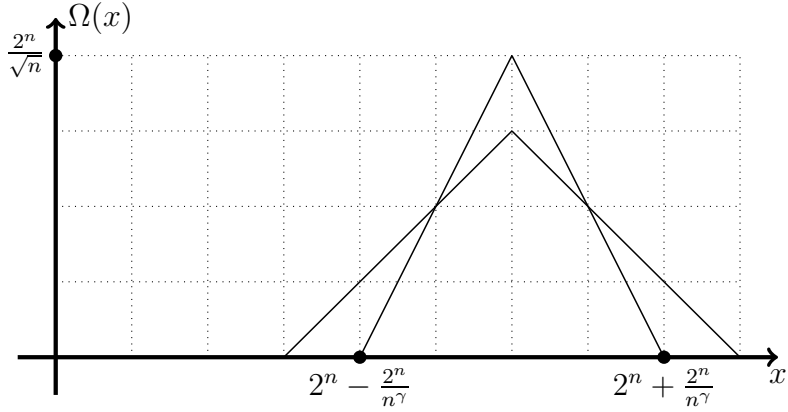
\begin{figure}
\begin{tikzpicture}
  \draw[black, ultra thick, ->] (0, -0.5) -- (0, 4.5) node[right]{$\Omega(x)$};
  \draw[black, ultra thick, ->] (-0.5, 0) -- (9.5, 0) node[below]{$x$};
  \draw[dotted] (0, 0) grid (9, 4);
  \node[left] at (0, 4)    {$\frac{2^n}{\sqrt{n}}$};
   \fill [black] (canvas cs:x=0cm,y=4cm)   circle (2.5pt);
    \fill [black] (canvas cs:x=4cm,y=0cm)   circle (2.5pt);
     \fill [black] (canvas cs:x=8cm,y=0cm)   circle (2.5pt);
\draw[black, line width = 0.2mm, black]   plot[smooth, domain=0:4] ({4+ 0.5*\x}, \x);
\draw[black, line width = 0.2mm, black]   plot[smooth, domain=0:4] ({8- 0.5*\x}, \x);
\draw[black, line width = 0.2mm, black]   plot[smooth, domain=0:3] ({3+ \x}, \x);
\draw[black, line width = 0.2mm, black]   plot[smooth, domain=0:3] ({9- \x}, \x);
\node[below] at (4, 0)    {$2^n-\frac{2^n}{n^\gamma}$};
\node[below] at (8, 0)    {$2^n+\frac{2^n}{n^\gamma}$};
  %\draw[black, line width = 0.5mm, black]   plot[smooth, domain=0:4] ({3.5+1 * sqrt(\x +1)}, \x);
  %  \draw[black, line width = 0.5mm, black]   plot[smooth, domain=0:4] ({8-1 * sqrt(\x+1)}, \x);
  % \draw[black, line width = 0.1mm, green!80!blue]   plot[smooth, domain=0:4] (2+0.01 * \x * \x, \x);
    % \draw[black, line width = 0.1mm, green!80!blue]   plot[smooth, domain=0:4] (\x * \x, \x);
  %\fill[green!80!blue] (2, 2) circle (0.2) node[below, outer sep = 5pt, black]{$(x, y)$};
  %\fill[red] (1, 4) circle (0.2) node[right, outer sep = 5pt, black]{$(1, 4)$};
 % \draw[red, dashed, thick] (1, 4) -- (2, 2) node[midway, left, outer sep = 5pt]{$d$};

  %\foreach \x in {1, 2, 3} \node[below] at (\x, 0){$\x$};
  %\foreach \y in {0, 1} \node[left] at (0, \y){$\y$};
\end{tikzpicture}
  \caption{The main idea of the proof of Theorem~\ref{thm1.5}}
  \label{kartinka2}
\end{figure}
\end{center}

\begin{proof}
	 The first two claims are easy to verify, so we omit their proofs.

Let $\sigma$ be a positive constant and consider the  Bernstein space $\mathcal{E}_{\sigma,1},$ i.e. the space of all entire functions $f$ such that
$$|f(z)|\leq e^{\sigma |z|} \text{ for any } z\in \mathbb C \text{ and }  |f|\leq 1 \text{ on } \mathbb R.$$ 

Recall Lemma 1 from paper~\cite{belhav}.
\begin{lam}
\label{klam}
 For any $\sigma>0$ there exist a (small) $\alpha(\sigma)\in (0,1/2)$ and a (big) $h(\sigma)>2$ such that for any $h\geq h(\sigma)$, any $f\in \mathcal{E}_{\sigma,1}$ and any compact interval $I\subset \mathbb R$
 $$
 |f|\leq e^{-hT_I} \text{ on } \mathbb R \implies |f|\leq e^{-Ch|I|}  \text{ on } \widetilde{I},$$
where $C > 0$ is an absolute constant and $\widetilde{I}$ is the interval centered at $c(I)$ with $|\widetilde{I}|=
%\sqrt{h^{2\alpha(\sigma)}-1}
h^{\alpha(\sigma)}|I|$.
%
%Note that $$\frac{1}{2}h^{\alpha(\sigma)}|I|\leq |\widetilde{I}| \leq h^{\alpha(\sigma)}|I|,
%$$if $h\geq h(\sigma)$ (for big values of $h(\sigma)$).
\end{lam}

Let us prove that  $\omega$ is not in the $BM$ class. Suppose the contrary. Hence, for a fixed $\sigma>0$ there exists a function, not identically zero, satisfying $f\in L^2(\mathbb R)$, $\mathrm{spec}(f)\subset [0,\sigma]$ and $|f(x)|\leq \omega(x)$ for all real $x$. We shall now use Lemma~\ref{klam}.
%Lemma 1 from paper~\cite{belhav}
  Notice that the function $f$ satisfies the conditions of this lemma with $h:=n^\vartheta$, where $\vartheta:=\gamma-1/2>0$ and $I:=I_n$ for $n\geq n(\gamma).$ We deduce from this lemma that there exists a universal constant $C$ and a power $\alpha\in (0,1/2)$, depending only on $\sigma$ such that 
  $$|f(x)|\leq \mathrm{exp}(-C2^n)$$ 
  on the interval $I_{n,1}:=(n^{\vartheta\alpha}/2)I_n$. 

\begin{rem}
 From now on until the end of the present article, the sign $X\asymp Y$ means that $\mathcal{C}_1Y\leq X\leq \mathcal{C}_2 Y$ for some constants $\mathcal{C}_1$ and $\mathcal{C}_2$ depending only on $\gamma, \alpha, \sigma$ and $C$. In this case, we shall say that $X$ is of order $Y$.
\end{rem}

Note that the length of this interval satisfies the bound $|I_{n,1}|\asymp n^{\vartheta(\alpha-1)}2^n.$ As a consequence, we infer that the inequality
$$|f(x)|\leq \exp(-Cn^{\vartheta(\alpha-1)}T_{I_{n,1}}(x))$$
is valid for $x\in I_{n,1}$. This means that we can apply Lemma~\ref{klam} once again, now for $h:=Cn^{\vartheta(1-\alpha)}$ and $I:=I_{n,1}.$ This yields the bound 
$$|f(x)|\leq e^{-Ch|I_{n,1}|}=e^{-C^22^n},$$
which is true for $x\in I_{n,2}:=((Cn^{\vartheta(1-\alpha)})^{\alpha}/2)I_{n,1}.$ It is not difficult to see that the corresponding interval $I_{n,2}$ has length of order 
$$C^\alpha n^{\vartheta(1-\alpha)\alpha+\vartheta(\alpha-1)}2^n=C^\alpha n^{-\vartheta(1-\alpha)^2}2^n.$$

Acting inductively, after $m$ steps, we arrive at the estimate $|f(x)|\leq \mathrm{exp}(-C^m2^n)$, verified by $f$ for $x\in I_{n,m}$ with $$|I_{n,m}|\asymp n^{-\vartheta(1-\alpha)^m}2^n.$$ Maybe, it is worth noting that  $I_{k,m}\cap I_{n,m}= \emptyset$ for any natural $m$, once $n\neq k.$ This results from the fact that we assume, as we can, that $C<1.$

We are now in position to prove that $f=0$ identically, which will lead to a contradiction. To this end, we estimate the logarithmic integral of $f$. For each natural number $m$ holds
$$
\int_{\mathbb R} \log|f(x)| dP(x) \leq -\sum_{n\geq 3} \int_{I_{n,m}} 2^{-2n} C^m 2^n dx \asymp -\sum_{n\geq 3} n^{-\vartheta (1-\alpha)^m}.
$$ 
Choosing $m$ sufficiently large and recalling that $(1-\alpha)\in (0,1)$, we arrive at the formula 
$$\int_{\mathbb R} \log|f(x)| dP(x)=-\infty.$$ Since $f\in L^2(\mathbb R)$ has the spectrum in the interval $[0,\sigma],$ it hence belongs to the Hardy class $H^2(\mathbb R).$ From the Jensen inequality, see~\cite{havjor} we deduce that $f=0$ identically, which contradicts our assumption. Hence, the second theorem is proved.
\end{proof}

\begin{rem} Alas, our proof above does not work, if one replaces in~\eqref{moyastarayatsipella} and in the definition of intervals  $I_n$ %in the argument of the exponential 
 the powers $n^\gamma$ by $\theta^{n}$ with $\theta\in (1,2)$.
\end{rem}

\begin{comment}
\begin{rem}
Consider for $n\in \mathbb N$ intervals $ I_{n,\ast}:=[2^{2^n},2^{2^{n}}\cdot(1+n^{-2})]$. Define for $x\in \mathbb R$ the following function
$$
\omega_\ast(x):=
\begin{cases}
e^{-n^2 T_{I_{n,\ast}}(x)}, \text{ if } x\in I_{n,\ast} \text{ with } n\geq 3,\\
1, \text{ otherwise.}
\end{cases}
$$
 The proof just above can be used to show that this function is not a $BM$ majorant. On the other hand, it is easy to see that $\omega_\ast\in L^1(dP)$ and that it satisfies
 $$
 \sup_{n\in \mathbb N} \frac{1}{(2^{2^{n+1}}-2^{2^{n}})}\int_{2^{2^n}}^{2^{2^{n+1}}}|\omega_\ast^\prime(x)|^rdx < \infty.
 $$
  This means that one can not replace $2^n$ in the definition of spaces $V_r$ with $2^{2^n}$, and hence also with a general Hadamard lacunary sequence.
\end{rem}
\end{comment}

\begin{rem}
It can be seen exactly as above that the function $\omega_\ast$ is not a strictly admissible majorant, recall Definition~\ref{horoshayamajoranta}.  For a detailed discussion of strictly admissible majorants, see~\cite{bel3}.
\end{rem}

\renewcommand{\refname}{References}


\begin{thebibliography}{99}

%\bibitem{belbarul}
%A. Baranov, Yu. Belov and A. Ulanovskii, \emph{Gap Problem for Separated Sequences and Beurling--Malliavin Theorem}, J. Fourier Anal. Appl., vol. 23, no. 4, (2017), 877--885.

\bibitem{barhav}
A. Baranov and V. Havin, \emph{Admissible majorants for model subspaces, and arguments of inner functions}, Funktsional. Anal. i Prilozhen., 40 (2006), 3–21 (in Russian); translated in Funct. Anal. Appl., 40 (2006), pp. 249--263.

\bibitem{belhav}
Y. Belov and V. Havin. \emph{The Beurling--Malliavin Multiplier Theorem and its
analogs for the de Branges spaces}. Springer series: Operator theory, ed.
Alpay., (2015), vol. 1., pp. 581--609.

\bibitem{bel1}
Y. Belov, \emph{Criteria for the admissibility of majorants for model subspaces with a rapidly increasing argument of the generating inner function}. Zap. Nauchn. Sem. S.-Peterburg. Otdel. Mat. Inst. Steklov. (POMI) 345 (2007), no. Issled. po Linein. Oper. i Teor. Funkts. 34, 55–84, 141 (in Russian, with English and Russian summaries); English translation in J. Math. Sci. (N.Y.) 148(6), pp. 813--829, (2008).

\bibitem{bel2}
Y. Belov, \emph{Necessary conditions for the admissibility of majorants for some model subspaces}. Algebra i Analiz 20(4), 1–26 (2008, in Russian). Translation in St. Petersburg Math. J. 20(4), pp. 507--525, (2009).

\bibitem{bel3}
Y. Belov, \emph{Model functions with nearly prescribed modulus}, St. Petersburg Math. J., 20:2, (2009), pp. 163--174.

\bibitem{bermal1}
A. Beurling and P. Malliavin, \emph{On Fourier transforms of measures with compact support}, Acta Math., vol. 107, (1962), pp. 291--309.

%\bibitem{bermal2} A. Beurling and P. Malliavin, \emph{On the closure of characters and the zeros of entire functions}, Acta Math., vol. 118, (1967), pp. 79--93.

\bibitem{borsod}
A. Borichev and M. Sodin, \emph{Weighted exponential approximation and non-classical orthogonal spectral measures}, Adv. Math, vol. 226, (2011), pp. 2503--2545.

\bibitem{bourdyat} 
J. Bourgain and S. Dyatlov, \emph{Spectral gaps without the pressure condition}, Ann. of Math., vol. 187, (2018), pp. 825--867.

\bibitem{debr} 
L. De Branges, \emph{Hilbert spaces of entire functions}, Prentice-Hall, (1968).

\bibitem{dyak} 
K. Dyakonov, \emph{Moduli and arguments of analytic functions from subspaces in $H^P$
that are invariant under the backward shift operator}, Sibirsk. Mat. Zh. 31 (1990), no. 6, 64--79; English transl., Siberian Math. J. 31 (1990), no. 6, 926--939 (1991). %MR1097956 (92f:30049).

%\bibitem{hav}
%V. Havin, private communication.

\bibitem{schlhan}
R. Han and W. Schlag, \emph{A higher-dimensional Bourgain--Dyatlov fractal uncertainty principle}, Analysis and PDE, 13(3), pp. 813--863, (2020).

\bibitem{hm1}
V. Havin and J. Mashreghi, \emph{Admissible majorants for model subspaces of $H^2$, Part I : Slow winding of the generating inner function}, Canad. J. Math., vol. 55, Issue 6, (2003), pp. 1231--1263.

\bibitem{hm2}
V. Havin and J. Mashreghi, \emph{Admissible majorants for model subspaces of $H^2$, Part II : Fast winding of the generating inner function}, Canad. J. Math., vol. 55, Issue 6, (2003), pp. 1264--1301.

\bibitem{havjor}
V. Havin and B. J\"oricke, \emph{The Uncertainty Principle in Harmonic Analysis}.


\bibitem{polkis}
S. Kislyakov and P. Perstneva, \emph{Indicator functions with uniformly bounded Fourier sums and large gaps in the spectrum}, J. Fourier Anal. Appl., vol.27, pp. 1--18,  (2022).

\bibitem{koos1} P. Koosis, \emph{The Logarithmic Integral I}. Cambridge Studies in Advanced Mathematics, vol. 12. Cambridge University Press, Cambridge, (1988).

\bibitem{koos2} P. Koosis, \emph{The Logarithmic Integral II}. Cambridge Studies in Advanced Mathematics, vol. 21., Cambridge University Press, Cambridge, (1992).

\bibitem{koos2.5}
P. Koosis, \emph{Kargaev's proof of Beurling's lemma}, unpublished manuscript.

\bibitem{koos3}
P. Koosis, \emph{Le\c cons sur le Th\'eor\`eme de Beurling et Malliavin}. Les Publications CRM, Montreal, (1996).

\bibitem{makpol}
N. Makarov and A. Poltoratski, \emph{Beurling--Malliavin theory for Toeplitz kernels}, Invent. Math., vol. 180, no. 3, (2010), 443--480.

\bibitem{nazhav}
D. Mashregi, F. Nazarov, and V. Khavin, \emph{The Beurling--Malliavin multiplier theorem: The seventh proof}, Algebra i Analiz, vol. 17, no. 5, (2005), pp. 3--68.

\bibitem{olev}
F Nazarov and A Olevskii, \emph{A Function with Support of Finite Measure and ``Small'' Spectrum}, 50 Years with Hardy Spaces.  In: Baranov, A., Kisliakov, S., Nikolski, N. (eds) 50 Years with Hardy Spaces. Operator Theory: Advances and Applications, vol. 261. Birkhäuser.

\bibitem{nikol}
N. Nikolski, \emph{\'El\'ements d'analyse avanc\'eee T.1 - Espaces de Hardy}, Belin, Paris, (2012), 224 p.

\bibitem{polt}
A. Poltoratski, \emph{Spectral gaps for sets and measures}, Acta Math., vol. 208, no. 1, (2012), pp. 151--209.

\bibitem{redh}
H. Redheffer, \emph{Completeness of sets of complex exponentials}, Adv. Math, vol. 24, Issue 1, (1977), pp. 1--62.

\bibitem{sz}
D. Stolyarov and P. Zatitskiy, \emph{Sharp transference principle for BMO functions and $A^p$}, J. Funct. Anal., vol. 281, Issue 6, (2021).


\bibitem{vasil}
I. Vasilyev, \emph{On the multidimensional Nazarov lemma}, Proc. Amer. Math Soc., vol. 150, no. 4, (2022), pp. 1601--1611.

\end{thebibliography}
\end{document}